\documentclass[10pt,final,oneside]{amsart}

\usepackage[margin=1in,includehead]{geometry}
\usepackage[secnum]{harrstyle}
\usepackage{verbatim}
\usepackage{setspace}
\usepackage{amssymb}
\usepackage[vcentermath]{youngtab}
\usepackage{epigraph}
\usepackage{diagrams}
\usepackage{enumitem}
\usepackage{wrapfig}
\usepackage{diagrams}
\usepackage{mathtools}

\setlength{\parskip}{\baselineskip}

\newcommand{\cA}{\cal A}
\newcommand{\Sp}{\mathrm{Sp}}

\renewcommand{\S}{S}
\newcommand{\PGL}{\mathrm{PGL}}

\newcommand{\Stab}{\mathrm{Stab}}
\newcommand{\M}{\cal M}
\newcommand{\Fix}{{\rm Fix}}

\newcommand{\llangle}{\langle\langle}
\newcommand{\rrangle}{\rangle\rangle}
\newcommand{\grad}{\mathrm{grad}}
\newcommand{\SL}{\mathrm{SL}}
\newcommand{\T}[1]{{\rm Teich}(\Sigma_{#1})}

\newcommand{\Mod}[1]{{\rm Mod}(\Sigma_{#1})}
\newcommand{\HH}[1]{{\rm SMod}(\Sigma_{#1})}

\newcommand{\Orb}{{\rm Orb}}
\newcommand{\sympkernel}{\tilde K}
\renewcommand{\im}{\rm Im}
\newcommand{\Heis}[1]{{\rm Heis_{#1}}}
\newcommand{\Hess}{{\rm Hess\;}}
\newcommand{\bc}{{\bf c}}
\newcommand{\Diff}{{\rm Diff}}

\renewcommand{\P}{\mathbb P}

\newarrow{ToX}{}{.}{}{.}{>}

\numberwithin{equation}{section}

\newtheoremstyle{named}{}{}{\itshape}{}{\bfseries}{.}{.5em}{\thmnote{#3}}
\theoremstyle{named}
\newtheorem*{namedtheorem}{Theorem}

%%%%%%%%%%%%%%%%%%%%%%%%%%%%%%%%%%%%%%%%%%%%%%%%%%%%%%%%%%

\begin{document}

\onehalfspacing

\title{The kernel of the monodromy of the universal family of degree $d$ smooth plane curves}
%		\dedicatory{}
%\date{}		%%%

\author{Reid Monroe Harris}
\date{}

\renewcommand{\textflush}{center}
\renewcommand{\epigraphflush}{center}
		\renewcommand{\epigraphrule}{0pt}

\maketitle	%%%	Must go after the title and author blocks!

\begin{abstract}
We consider the parameter space $\cal U_d$ of smooth plane curves of degree $d$. The universal smooth plane curve of degree $d$ is a fiber bundle $\cal E_d\to\cal U_d$ with fiber diffeomorphic to a surface $\Sigma_g$. This bundle gives rise to a monodromy homomorphism $\rho_d:\pi_1(\cal U_d)\to\Mod{g}$, where $\Mod{g}:=\pi_0(\Diff^+(\Sigma_g))$ is the mapping class group of $\Sigma_g$. The main result of this paper is that the kernel of $\rho_4:\pi_1(\cal U_4)\to\Mod{3}$ is isomorphic to $F_\infty\times\Z/3\Z$, where $F_\infty$ is a free group of countably infinite rank. In the process of proving this theorem, we show that the complement $\T{g}\setminus\cal H_g$ of the hyperelliptic locus $\cal H_g$ in Teichm\"uller space $\T{g}$ has the homotopy type of an infinite wedge of spheres. As a corollary, we obtain that the moduli space of plane quartic curves is aspherical. The proofs use results from the Weil-Petersson geometry of Teichm\"uller space together with results from algebraic geometry.
\end{abstract}

\section{Introduction}

Let $\P\left(\Sym^d\left(\C^3\right)\right)=\P^N$, where $N=\binom{d+2}{2}-1$, be the parameter space of plane curves of degree $d>0$. Elements of $\P^N$ are homogeneous degree $d$ polynomials in variables $x,y,z$. Let $\cal U_d$ denote the {\it parameter space of smooth} plane curves of degree $d$. More precisely, $\cal U_d=\P^N\setminus\Delta_d$ is the complement of the {\it discriminant locus} $\Delta_d\subset\P^N$ which is the set of polynomials $f$ such that the curve $V(f)=\{p\in\P^2:f(p)=0\}$ is singular.

The {\it universal smooth plane curve of degree $d$} is the fiber bundle $\cal E_d\to\cal U_d$ defined by
\begin{align*}
\cal E_d:=\{(f,p)\in\cal U_d\times\P^2: f(p)=0\}&\to\cal U_d\\
(f,p)&\mapsto f
\end{align*}
There exists a monodromy homomorphism $$\rho_d:\pi_1\left(\cal U_d\right)\to\Mod{g},$$ where $\Mod{g}:=\pi_0({\rm Diff}^+(\Sigma_g))$ is the mapping class group. We omit reference to the basepoint in $\pi_1\left(\cal U_d\right)$, however, it can be taken to be the Fermat curve $f_F(x,y,z)=x^d+y^d+z^d=0$. The homomorphism $\rho_d$ is called the {\it geometric monodromy of the universal smooth plane curve of degree $d$}. A finite presentation for $\pi_1(\cal U_d)$ has been given by L\"onne \cite[Main Theorem]{Lon}.

Two natural questions are to determine the image $\im(\rho_d)$ and kernel $K_d:=\ker(\rho_d)$. Dolgachev and Libgober have given a description of $\pi_1(\cal U_3)$ as an extension $$0\to \Heis{3}(\Z/3\Z)\to\pi_1(\cal U_3)\xrightarrow{\rho_3}\Mod{1}\to 0$$ \cite[Exact Squence 4.8]{DL} of $\Mod{1}$ by the $\Z/3\Z$-points of the 3-dimensional Heisenberg group \cite[Page 12]{DL} $$\Heis{3}(\Z/3\Z):=\left\{\left(\begin{array}{ccc} 1&*&*\\0&1&*\\0&0&1\end{array}\right): *\in\Z/3\Z\right\}$$ The action $\Mod{1}\circlearrowleft H_1\left(\Heis{3}(\Z/3\Z);\Z\right)\cong(\Z/3\Z)^2$ is the action on the Weierstra\ss\, points of the elliptic curve. This action is exactly the composition $\Mod{1}\xrightarrow{\Psi_1}\SL_2(\Z)\to\SL_2(\Z/3\Z)$, where $\Psi_1:\Mod{1}\cong\SL_2(\Z)$ is the action on $H_1(\Sigma_1;\Z)$, see \cite[Theorem 2.5]{FM}, and $\SL_2(\Z)\to\SL_2(\Z/3\Z)$ is the natural projection.

For higher degrees $d\ge 4$, there is an exact sequence $$0\to K_d\to \pi_1(\cal U_d)\xrightarrow{\rho_d} \Mod{g}.$$ The map $\rho_d$ is, in general, not surjective. However, Salter \cite[Theorem A]{Salt2} has shown that $\im(\rho_d)$ always has finite index in $\Mod{g}$. For $d=4$, Kuno has shown that $\im(\rho_4)=\Mod{3}$ and that $K_4$ is infinite \cite[Proposition 6.3]{K}. For $d=5$, Salter \cite[Theorem A]{Salt1} shows that $\im(\rho_5)$ is the stabilizer $\Mod{6}[\phi]$ of a certain spin structure $\phi$ on $\Sigma_6$, the spin structure $\phi=e^*\cal O(1)$ induced on $\Sigma_6$ by its embedding $e:\Sigma_6\to\P^2$ as a plane curve. For odd $d\ge 5$, Salter shows that the monodromy group $\im(\rho_d)$ is the stabilizer of a spin structure on $\Sigma_g$, for $g=\binom{d+1}{2}$. For even $d\ge 6$, $\im(\rho_d)$ is only known to be finite index in this stabilizer, hence in $\Mod{g}$ \cite[Theorem A]{Salt2}.

Another result in this vein $\pi_1(\cal U_d)$ can be found in \cite{CT}. Recall that $\Mod{g}$ acts on $H_1(\Sigma_g;\Z)$ preserving the intersection form. This gives rise to the {\it symplectic representation} $\Psi_g:\Mod{g}\to\Sp_{2g}(\Z)$. Consider the composition $$\Psi_g\circ\rho_d:\pi_1(\cal U_d)\to\Sp_{2g}(\Z).$$ This representation is called the {\it algebraic monodromy of the universal smooth plane curve of degree d}. Carlson and Toledo show that $\sympkernel_d:=\ker(\Psi_g\circ\rho_d)$ is {\it large} \cite[Theorem 1.2]{CT}, i.e. there is a homomorphism $\sympkernel_d\to G$ to a noncompact semisimple real algebraic Lie group $G$ with Zariski-dense image.

In this paper we prove the following theorem, which is a refinement of Kuno's theorem \cite[Proposition 6.3]{K} that $K_4$ is infinite. In the statement, $\HH{g}<\Mod{g}$ denotes the centralizer of a fixed hyperelliptic involution, the homotopy class of an order 2 homeomorphism $\tau:\Sigma_g\to\Sigma_g$ which acts on $H_1(\Sigma_g;\Z)$ by multiplication by $-1$.

\begin{thm}\label{M1}
The group $K_4$ is isomorphic to $F_\infty\times\Z/3\Z$, where $F_\infty$ is an infinite rank free group. Moreover, $F_\infty$ has a free generating set in bijection with the set of cosets of the hyperelliptic mapping class group $\HH{3}$, and $$H_1(K_4;\Q)\cong \Q[\Mod{3}/\HH{3}]$$ as $\Mod{3}$-modules.
\end{thm}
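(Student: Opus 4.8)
The plan is to realize $K_4$ geometrically through the action of $G:=\PGL_3(\C)$ on $\cal U_4$ by projective change of coordinates, exploiting the classical fact that a smooth plane quartic is exactly the canonical model of a non-hyperelliptic genus $3$ curve (by adjunction the hyperplane class restricts to $K_C$). Since the canonical embedding is intrinsic, every automorphism of such a curve extends to $\P^2$, so $G$ acts on $\cal U_4$ with finite stabilizers equal to the automorphism groups of the curves, and the stacky quotient is the moduli space $\cal M_3\setminus\cal H_3$ of non-hyperelliptic genus $3$ curves. Passing to homotopy quotients (Borel constructions) I would obtain a homotopy equivalence
$$\cal U_4/\!\!/G:=\cal U_4\times_G EG\;\simeq\;(\T3\setminus\cal H_3)\times_{\Mod3}E\Mod3=:M,$$
where on the right $\Mod3$ acts on $\T3\setminus\cal H_3$ with quotient $\cal M_3\setminus\cal H_3$. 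Because the universal curve $\cal E_4\to\cal U_4$ is $G$-equivariant, it descends to the universal curve over $M$, whose monodromy $\pi_1(M)\to\Mod3$ is tautological; pulling back identifies $\rho_4$ with the composite $\pi_1(\cal U_4)\to\pi_1(M)\to\Mod3$.

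I would then extract $K_4$ from two Serre exact sequences. First, from the fibration $\T3\setminus\cal H_3\to M\to B\Mod3$ together with the (assumed) fact that $\T3\setminus\cal H_3$ is homotopy equivalent to an infinite wedge of circles indexed by the components $\Fix(\tau)$ of $\cal H_3$ — equivalently by the conjugates of a fixed hyperelliptic involution, i.e. by $\Mod3/\HH3$ — I get $\pi_2(M)=0$ and a short exact sequence
$$1\to F_\infty\to\pi_1(M)\to\Mod3\to1,\qquad F_\infty=\pi_1(\T3\setminus\cal H_3).$$
Second, the Borel fibration $\cal U_4\to M\to BG$ has $\pi_1(BG)=1$ and $\pi_2(BG)=\pi_1(G)\cong\Z/3\Z$; combined with $\pi_2(M)=0$, its long exact sequence reads
$$1\to\Z/3\Z\xrightarrow{\ \partial\ }\pi_1(\cal U_4)\to\pi_1(M)\to1.$$
In particular $\rho_4$ is surjective, in agreement with Kuno, and $K_4=\ker\rho_4$ is the preimage of $F_\infty=\ker(\pi_1(M)\to\Mod3)$, giving $1\to\Z/3\Z\to K_4\to F_\infty\to1$.

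It remains to see that this extension is a direct product. The subgroup $\im(\partial)\cong\Z/3\Z$ is the image of $\pi_1(G)$ under the orbit map $g\mapsto g\cdot f_0$, and it is central in $\pi_1(\cal U_4)$: for any loop $\epsilon$ at the basepoint $f_0$ and a generating loop $\delta$ of $\pi_1(G)$, the map $S^1\times S^1\to\cal U_4$, $(s,t)\mapsto\delta(s)\cdot\epsilon(t)$, exhibits the orbit loop $s\mapsto\delta(s)\cdot f_0$ representing $\partial[\delta]$ and the loop $\epsilon$ as commuting classes; since such $\epsilon$ generate $\pi_1(\cal U_4)$, the element $\partial[\delta]$ is central. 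Hence $\Z/3\Z$ is central in $K_4$, and since $F_\infty$ is free we have $H^2(F_\infty;\Z/3\Z)=0$, so the central extension splits and $K_4\cong F_\infty\times\Z/3\Z$. The free generators of $F_\infty$ are the meridians of the codimension-$2$ loci $\Fix(\tau)$, which $\Mod3=\pi_1(\cal U_4)/K_4$ permutes exactly as it permutes its cosets $\Mod3/\HH3$; therefore $H_1(K_4;\Q)\cong H_1(F_\infty;\Q)\cong\Q[\Mod3/\HH3]$ as $\Mod3$-modules.

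The main obstacle is twofold. The genuinely hard geometric input, which I would establish separately using the Weil–Petersson geometry of $\T3$ (the loci $\Fix(\tau)$ are disjoint, totally geodesic, convex submanifolds of real codimension $2$), is that $\T3\setminus\cal H_3$ is homotopy equivalent to a wedge of circles with the stated indexing; this is what makes $\pi_1$ free and forces asphericity of the moduli space. Within the argument above, the most delicate point is the clean identification of the homotopy quotient $\cal U_4/\!\!/G$ with the moduli Borel construction $M$, and the verification that the induced map on $\pi_1$ is precisely the monodromy $\rho_4$; this rests on the algebraic-geometric description of plane quartics as canonically embedded non-hyperelliptic curves and on carefully tracking the finite stabilizers, for which the homotopy-quotient formalism is essential.
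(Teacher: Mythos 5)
Your proposal is correct in its essentials, but it takes a genuinely different route from the paper's. The paper works directly with the marked parameter space $\cal U_4^{mark}$ (the cover of $\cal U_4$ corresponding to $K_4$): it proves (Proposition \ref{principal}) that $\tilde\varphi_4:\cal U_4^{mark}\to\T{3}\setminus\cal H_3$ is a principal $\PGL_3(\C)$-bundle --- using the same algebraic geometry you invoke, namely the canonical embedding of non-hyperelliptic genus $3$ curves and the uniqueness of the $g^2_4$ --- cites Kuno's surjectivity of $\rho_4$ to get connectedness of $\cal U_4^{mark}$, and then observes that since the base is homotopy equivalent to a wedge of circles (Theorem \ref{M2}) and $\PGL_3(\C)$ is connected, the bundle admits a section and is trivial, so $\cal U_4^{mark}\cong\PGL_3(\C)\times(\T{3}\setminus\cal H_3)$ and $K_4\cong\Z/3\Z\times F_\infty$ falls out as a literal product of fundamental groups. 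You instead identify the two homotopy quotients $\cal U_4\times_{\PGL_3(\C)}E\PGL_3(\C)$ and $(\T{3}\setminus\cal H_3)\times_{\Mod{3}}E\Mod{3}$ through the moduli stack of non-hyperelliptic genus $3$ curves, and extract $K_4$ from two fibration exact sequences as a central extension $1\to\Z/3\Z\to K_4\to F_\infty\to 1$, split because $H^2(F_\infty;\Z/3\Z)=0$; your centrality argument (the torus trick applied to the orbit map, whose image on $\pi_1$ is the image of the connecting map $\pi_2(B\PGL_3(\C))\cong\pi_1(\PGL_3(\C))\cong\Z/3\Z$) is valid, and splitting plus centrality does give a direct product. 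Your route buys two things: it reproves Kuno's surjectivity rather than citing it (both maps $\pi_1(\cal U_4)\to\pi_1(M)$ and $\pi_1(M)\to\Mod{3}$ are surjective by your sequences), and it avoids checking local triviality and triviality of the principal bundle. What it costs is concentrated exactly where you flag it: the identification of the two Borel constructions, compatibly with the universal curves so that $\rho_4$ really is the composite $\pi_1(\cal U_4)\to\pi_1(M)\to\Mod{3}$, requires either genuine topological-stack machinery or a Morita-equivalence argument whose cleanest implementation runs through the fiber product $\cal U_4\times_{\cal M_3^{nhyp}}(\T{3}\setminus\cal H_3)$ --- which is nothing other than the paper's $\cal U_4^{mark}$, so the paper's object reappears implicitly; moreover your argument as stated yields only $\pi_1$-level conclusions, whereas the paper's global product decomposition gives $\pi_i(\cal U_4)\cong\pi_i(\PGL_3(\C))$ for $i\ge 2$ and asphericity of the moduli space of plane quartics for free (your fibrations would recover this with a little extra work). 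Both arguments rest on the same two pillars, which you correctly isolate: Theorem \ref{M2} together with the $\Mod{3}$-equivariant indexing of the wedge by $\Mod{3}/\HH{3}$ (Corollary \ref{HomotopyEquiv}), and the classical fact that smooth plane quartics are precisely the canonically embedded non-hyperelliptic genus $3$ curves.
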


The idea for the proof of Theorem \ref{M1} is to exhibit the cover $\cal U_4^{mark}\to\cal U_4$ corresponding to $K_4$ as a principal fiber bundle over the complement $\T{3}\setminus\cal H_3$ of the hyperelliptic locus $\cal H_3$ in Teichm\"uller space $\T{3}$. The following theorem determines the homotopy type of $\T{3}\setminus\cal H_3$.

\begin{thm}\label{M2}
Let $g\ge 3$. The hyperelliptic complement $\T{g}\setminus\cal H_g$ has the homotopy type of a wedge $\displaystyle\bigvee_{i=1}^\infty S^n$ of infinitely many $n$-spheres, where $n=2g-5$.
\end{thm}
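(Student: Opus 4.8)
The plan is to realize $\T{g}\setminus\cal H_g$ as the complement of a disjoint, well-separated family of convex submanifolds and to read off its homotopy type from a Thom--excision computation, upgrading homology to homotopy at the end. First I would record the geometry of the hyperelliptic locus. Since a closed surface of genus $\ge 2$ carries at most one hyperelliptic involution, two fixed-point sets $\Fix(\sigma_1),\Fix(\sigma_2)$ of conjugates of $\tau$ meet only if $\sigma_1=\sigma_2$ in $\Mod{g}$; hence $\cal H_g=\bigsqcup_\alpha V_\alpha$ is a disjoint union of submanifolds indexed by the cosets $\alpha\in\Mod{g}/\HH{g}$. Each $V_\alpha$ is the fixed locus of a finite-order Weil--Petersson isometry, so by Wolpert's convexity results it is a totally geodesic, Weil--Petersson-convex, properly embedded complex submanifold; as the Teichm\"uller space of a $(2g+2)$-marked sphere it is contractible and has complex codimension $g-2$, i.e.\ real codimension $2g-4=n+1$.

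Next I would compute $H_*(\T{g}\setminus\cal H_g)$. Using proper discontinuity of the $\Mod{g}$-action together with convex separation of the $V_\alpha$ in the CAT(0) completion, I would choose pairwise-disjoint Weil--Petersson tubular neighborhoods $N_\alpha$. Because each $V_\alpha$ is a complex submanifold, its normal bundle is complex, hence oriented, and is trivial over the contractible base $V_\alpha$, so its unit sphere bundle is $V_\alpha\times S^{\,n}$ with $n=2g-5$. Excision together with the Thom isomorphism gives $H_k(\T{g},\T{g}\setminus\cal H_g)\cong\bigoplus_\alpha H_{k-(2g-4)}(V_\alpha)$, which is $\bigoplus_\alpha\Z$ for $k=2g-4$ and vanishes otherwise. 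Feeding this into the long exact sequence of the pair and using that $\T{g}$ is contractible yields $\widetilde H_j(\T{g}\setminus\cal H_g)\cong\bigoplus_{\Mod{g}/\HH{g}}\widetilde H_j(S^{\,n})$, free abelian of countable rank concentrated in degree $n$, with generators represented by the linking spheres (meridians) of the components $V_\alpha$.

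To pass from homology to homotopy type, sending each wedge summand to a linking sphere of the corresponding $V_\alpha$ defines a map $\bigvee_{\Mod{g}/\HH{g}}S^{\,n}\to\T{g}\setminus\cal H_g$ that is an isomorphism on reduced homology. For $g\ge 4$ the codimension $2g-4\ge 4$ exceeds $2$, so a transversality argument (push a loop and a bounding disk off the codimension-$\ge 3$ locus $\cal H_g$) shows $\T{g}\setminus\cal H_g$ is simply connected; since the wedge is also simply connected, the Whitehead theorem promotes the homology isomorphism to a homotopy equivalence, finishing these cases.

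The hard part is the case $g=3$, which is exactly the one needed for $\cal U_4$: here $n=1$ and the codimension drops to $2$, so $\T{3}\setminus\cal H_3$ is no longer simply connected and the homological computation only shows that it has the homology of an infinite wedge of circles. What remains is to prove that $\pi_1(\T{3}\setminus\cal H_3)$ is \emph{free} on the meridians. This is where I expect to use the full strength of the Weil--Petersson geometry: the components $V_\alpha$ are disjoint convex subsets of the uniquely geodesic, negatively curved space $\T{3}$, and I would use nearest-point projection onto these convex sets together with the unique Weil--Petersson geodesics to build an explicit deformation retraction of $\T{3}\setminus\cal H_3$ onto a graph (equivalently, to exhibit the Voronoi decomposition cut out by the $V_\alpha$ as having a tree-like nerve, so that no $2$-dimensional relations among meridians are created). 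Carrying out this geodesic push-off construction---showing the retraction is well defined despite the incompleteness of the Weil--Petersson metric and that its target is $1$-dimensional---is the main technical obstacle; once it is in place the resulting connected graph is homotopy equivalent to $\bigvee S^1$, completing the proof for all $g\ge 3$.
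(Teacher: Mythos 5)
Your argument through the case $g\ge 4$ is essentially correct, and it is a genuinely different route from the paper's: you compute $H_*(\T{g}\setminus\cal H_g)$ by excision and the Thom isomorphism for the (trivial, oriented) normal bundles of the components $V_\alpha$, then upgrade to a homotopy equivalence with a wedge of spheres via simple connectivity (codimension $2g-4\ge 3$) and the Whitehead theorem. The paper instead runs relative Morse theory for a single function: it constructs (Lemma \ref{Lem1}) a proper, strictly convex combination of geodesic length functions, with generic coefficients, whose restriction to each component of $\cal H_g$ has a unique critical point and whose critical values are distinct and discrete; crossing each type II critical value attaches exactly one $n$-handle to the complement, so $\T{g}\setminus\cal H_g$ is a $0$-handle with infinitely many $n$-handles attached. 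For $g\ge 4$ both methods work, and yours is arguably more elementary at that stage.

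The genuine gap is the case $g=3$, which is precisely the case the paper needs for Theorem \ref{M1}. There the codimension is $2$ and $n=1$, so your homological computation only shows that $\T{3}\setminus\cal H_3$ has the homology of an infinite wedge of circles; this does not determine the homotopy type, since complements of complex-codimension-one analytic subvarieties typically have highly nonfree fundamental groups with abelianization free abelian (discriminant complements such as $\cal U_d$ itself are the standard example, and perfect or acyclic $\pi_1$ cannot be ruled out by homology alone). You acknowledge this, but what you offer in its place --- a nearest-point-projection/Voronoi retraction of $\T{3}\setminus\cal H_3$ onto a graph with ``tree-like nerve'' --- is a plan, not a proof, and the assertion that the Voronoi cells cut out by infinitely many disjoint convex sets in a negatively curved, \emph{incomplete}, uniquely geodesic space produce no two-dimensional relations among the meridians is exactly the statement at issue; nothing in your sketch rules out complicated cell intersections, and the incompleteness of the Weil--Petersson metric (closest points on a component of $\cal H_3$ need not exist in $\T{3}$ before passing to the CAT(0) completion) is waved at rather than handled. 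The paper's construction is designed to close exactly this gap: because the Morse function is globally convex with a unique minimum, and its restriction to each totally geodesic component has a unique minimum, the handle decomposition of the complement for $g=3$ has only a single $0$-handle and $1$-handles, so $\pi_1(\T{3}\setminus\cal H_3)$ is free on the meridians with no further argument. To complete your proof you would need to either carry out the retraction construction in full (including the incompleteness issues), or replace it with something like the paper's convexity argument; as written, the theorem is unproved in the one case where it is subsequently used.
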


From Theorem \ref{M2}, we can conclude that $\cal U_4^{mark}\to\T{3}\setminus\cal H_3$ is trivial and Theorem \ref{M1} follows.

We will also show that the structure of the group $K_d$ is closely related to that of the hyperelliptic mapping class group. The failure of our proof method in Theorem \ref{M1} for degrees $d>4$ is due to the lack of knowledge of the topology of the locus of planar curves in the moduli space of Riemann surfaces; there are many more obstructions to being planar than being hyperelliptic. 

The paper is organized as follows. Section \ref{WPmetric} recalls basic facts about the Weil-Petersson metric on Teichm\"uller space and the hyperelliptic locus. Section \ref{HyperComplement} introduces the geodesic length functions. These will then be used to prove Theorem \ref{M2}. The proof of Theorem \ref{M1} is carried out in section \ref{section:PlaneCurves}.

\subsection{Acknowledgements} The author would like to thank his advisor Benson Farb, and Nick Salter, for their extensive comments on earlier drafts of this paper. The author would additionally like to thank Howard Masur for information about the Weil-Petersson Metric and geometric length functions, Shmuel Weinberger, Madhav Nori, and Scott Wolpert. The author would also like to thank the Jump Trading Mathlab Research Fund for their support.

\section{The Hyperelliptic Locus and the Weil-Petersson Metric}\label{WPmetric}

For the rest of the paper, let $g\ge 2$ unless otherwise stated. In this section we give the necessary background on Teichm\"uller space and its geometry. We review the Weil-Petersson metric on Teichm\"uller space and describe the geometric properties of the hyperelliptic locus in terms of this metric, see Proposition \ref{locus}.

\subsection{Teichm\"uller Space} We recall the basic theory of Teichm\"uller space and of the moduli space of Riemann surfaces of genus $g$. For additional background, see e.g. \cite{FM}. Let $\T{g}$ denote the Teichm\"uller space of genus $g\ge 2$ curves. That is, $\T{g}$ is the set of equivalence classes $[X,h]$ of pairs $(X,h)$, where $X$ is a complex curve of genus $g$ and $h$ is a {\it marking}, i.e. a homeomorphism $\Sigma_g\to X$. Two pairs $(X,h)$ and $(Y,g)$ are equivalent if $h\circ g^{-1}:Y\to X$ is isotopic to a biholomorphism. We will also denote such an equivalence class $[X,h]$ by $\cal X$. The (complex) dimension of $\T{g}$ is $3g-3$.

The mapping class group $\Mod{g}$ acts on $\T{g}$ by $$[f]\cdot [X,h]=[X,h\circ f^{-1}]$$ where $[f]\in\Mod{g}$. This action is properly discontinuous \cite[Theorem 12.2]{FM} so that the quotient space $\cal M_g:=\Mod{g}\backslash\T{g}$, the {\it moduli space of genus $g$ Riemann surfaces}, is an orbifold. Let $\pi:\T{g}\to\cal M_g$ denote the quotient map. The space $\cal M_g$ can also be defined as the space of all complex curves of genus $g$, up to biholomorphism. Note that the orbifold fundamental group $\pi_1^{orb}(\cal M_g)$ of $\cal M_g$ is $\Mod{g}$.

\subsection{Weil-Petersson Metric} In this subsection we recall the Weil-Petersson (WP) metric and some of its properties. The WP metric is a certain K\"ahler metric on $\T{g}$ which gives rise to a Riemannian structure on $\T{g}$. For more on the Weil-Petersson metric, see the survey \cite{Wol3}.

The cotangent space $T^*_{\cal X}\T{g}$ at a point $\cal X=[X,h]\in\T{g}$ can be identified with the space $Q(X)$ of holomorphic quadratic differentials on $X$. Define a (co)metric on $T^*_{\cal X}\T{g}$ by $$\llangle\varphi,\psi\rrangle:=\int_X\varphi\bar\psi (ds^2)^{-1},$$ where $ds^2$ is the hyperbolic metric on $X$ and $(ds^2)^{-1}$ is its dual. The {\it Weil-Petersson (WP) metric} is defined to be the dual of $\llangle\cdot,\cdot\rrangle$.

The WP metric is a $\Mod{g}$-invariant, incomplete \cite[Section 2]{Wol2}, smooth Riemannian metric of negative sectional curvature \cite[Theorem 2]{Tromba}. Teichm\"uller space $\T{g}$ equipped with the WP metric is {\it geodesically convex} \cite[Subsection 5.4]{Wol}, meaning that any two points $\cal X,\cal Y\in\T{g}$ are connected by a unique geodesic. When referring to any metric properties of Teichm\"uller space, we will assume they are with respect to the WP metric unless otherwise stated.

\subsection{Hyperelliptic Locus} A {\it hyperelliptic curve} $X$ is a complex curve equipped with a biholomorphic involution $\tau:X\to X$ such that $X/\tau$ is isomorphic to $\P^1$. Such a map $\tau$, if it exists, is called a {\it hyperelliptic involution}. An element $[\tau]\in\Mod{g}$ is called a {\it hyperelliptic mapping class} if $[\tau]^2=1$ and $\Sigma_g/\tau$ is homeomorphic to $\bb P^1$, or equivalently, if $[\tau]$ acts on $H_1(\Sigma_g;\Z)$ by multiplication by $-1$.

Let $\bar{\cal H_g}\subset\cal M_g$ denote the locus of hyperelliptic curves and let $\cal H_g:=\pi^{-1}(\bar{\cal H_g})$, where $\pi:\T{g}\to\cal M_g$ is the quotient map. The set $\cal H_g$ is called the {\it hyperelliptic locus}. It has (complex) dimension $2g-1$. Note that when $g=3$, the hyperelliptic locus $\cal H_3$ has complex codimension 1 in $\T{g}$.

The following proposition collects some facts that will be useful in later sections.

\begin{prop}\label{locus}
The locus $\cal H_g$ is a complex-analytic submanifold of $\T{g}$. Moreover, $\cal H_g$ has infinitely many connected components (see Figure 1). If $H$ is any component of $\cal H_g$ then $H$ is totally geodesic in $\T{g}$ and $H$ is biholomorphic to $\T{0,2g+2}$, the Teichm\"uller space of a sphere with $2g+2$ punctures. In particular, each component of $\cal H_g$ is contractible.
\end{prop}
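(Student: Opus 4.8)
The plan is to establish each assertion of Proposition \ref{locus} in turn, building on the standard description of hyperelliptic curves as double covers of $\P^1$ branched over $2g+2$ points.

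\begin{proof}[Proof proposal]
First I would set up the local description of $\cal H_g$. A point $\cal X=[X,h]\in\cal H_g$ carries a hyperelliptic involution $\tau_X:X\to X$, and by the Riemann--Hurwitz formula the quotient $X/\tau_X\cong\P^1$ is branched over exactly $2g+2$ points. Conversely, any choice of $2g+2$ distinct points on $\P^1$ determines a genus $g$ hyperelliptic curve as the double cover branched there. The key analytic input is that the hyperelliptic involution $\tau_X$ is unique (for $g\ge 2$) and varies holomorphically; in terms of the action on $\T{g}$, the fixed locus $\Fix([\tau])$ of a given hyperelliptic mapping class $[\tau]\in\Mod{g}$ is a connected complex-analytic submanifold, since $[\tau]$ acts on $\T{g}$ by a biholomorphic involution and the fixed-point set of a holomorphic involution of a complex manifold is a complex submanifold. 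Each component $H$ of $\cal H_g$ is exactly such a fixed locus $\Fix([\tau])$ for one conjugate $[\tau]$ of the fixed hyperelliptic involution.

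Next I would handle the component count and the geodesic property together using the WP metric. Because the WP metric is $\Mod{g}$-invariant and negatively curved with $\T{g}$ geodesically convex (both recalled in the excerpt), the fixed locus of the isometric involution $[\tau]$ is totally geodesic: given two points of $\Fix([\tau])$, the unique WP geodesic between them is preserved by the isometry $[\tau]$, hence pointwise fixed, so it lies in $\Fix([\tau])$. This simultaneously shows $H$ is totally geodesic and, combined with geodesic convexity, that each such $H$ is connected. The infinitude of components then follows because there are infinitely many distinct conjugacy-theoretic realizations: the set of hyperelliptic mapping classes conjugate to the fixed $[\tau]$ is infinite (distinct conjugates have distinct, indeed disjoint for $g\ge 2$, fixed loci by uniqueness of the hyperelliptic involution), giving infinitely many components indexed by cosets $\Mod{g}/\HH{g}$.

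For the biholomorphism $H\cong\T{0,2g+2}$, I would make precise the branched-cover correspondence. Sending $\cal X\in H$ to the marked quotient $X/\tau_X$ with its $2g+2$ marked branch points defines a holomorphic map $H\to\T{0,2g+2}$; the marking on $\Sigma_g$ descends to a marking of the punctured sphere because $[\tau]$ is fixed, so the branch data is canonically ordered. The inverse is the double-cover construction, which is also holomorphic in the branch data, yielding a biholomorphism. Since $\T{0,2g+2}$ is a cell (it is homeomorphic to a ball of real dimension $2(2g+2)-6$), each component $H$ is contractible.

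The main obstacle I expect is the uniqueness and holomorphic rigidity of the hyperelliptic involution: everything downstream (that components are disjoint, that the quotient data is well defined, that $\Fix([\tau])$ is a manifold rather than a singular set) rests on showing $\tau_X$ is canonical and that $[\tau]$ acts biholomorphically with a smooth complex fixed locus. I would isolate this as a lemma, citing the standard fact that for $g\ge 2$ a hyperelliptic involution is central in the automorphism group and unique, and that fixed loci of holomorphic automorphisms of $\T{g}$ are complex submanifolds. The dimension count $\dim_{\C}\T{0,2g+2}=2g-1$ then confirms the stated dimension of $\cal H_g$ and provides a useful consistency check.
\end{proof}
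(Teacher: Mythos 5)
Your proposal is correct, and its skeleton coincides with the paper's: decompose $\cal H_g$ into the fixed loci $\Fix([\tau])$ of hyperelliptic mapping classes, get disjointness of these loci from the uniqueness of the hyperelliptic involution on a given curve, and get the totally geodesic property from uniqueness of WP geodesics (the isometry $[\tau]$ carries the geodesic between two fixed points to a geodesic with the same endpoints, hence fixes it). Where you genuinely diverge is in the analytic part: the paper simply cites Nag (Section 4.1.5) for the facts that $\cal H_g$ is a complex-analytic submanifold and that each component is biholomorphic to $\T{0,2g+2}$, whereas you sketch direct proofs --- Cartan linearization to show that the fixed set of a finite-order biholomorphism of $\T{g}$ is a complex submanifold, and the marked branched-cover correspondence $\cal X \mapsto (X/\tau_X,\ \text{branch points})$ for the biholomorphism. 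This buys self-containedness, at the cost of having to handle the genuine subtleties you correctly flag (holomorphicity of the $\Mod{g}$-action, descent of the marking, ordering of the branch points); the paper's citation sidesteps these. You also do more than the paper in two places: you prove connectedness of each $\Fix([\tau])$ via geodesic convexity, and you index the components by $\Mod{g}/\HH{g}$, which the paper defers to Corollary \ref{HomotopyEquiv}. Two small caveats, neither worse than the paper's own exposition: the infinitude of components needs $[\Mod{g}:\HH{g}]=\infty$, which you assert rather than prove (it fails at $g=2$, where the hyperelliptic involution is central and $\cal H_2=\T{2}$ is connected --- the proposition's "infinitely many components" claim really requires $g\ge 3$); and to conclude that the disjoint closed sets $\Fix([\tau])$ are exactly the connected components one should observe that the collection is locally finite, e.g.\ by proper discontinuity of the action.
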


\begin{proof}
Let $[\tau]\in\Mod{g}$ be a hyperelliptic mapping class. Then $[\tau]$ acts on $\T{g}$ with fixed set $$\Fix([\tau]):=\{[Y,g]\in\T{g}: [Y,g]=[Y,g\circ\tau]\}.$$

First, we show that $$\cal H_g=\bigcup_{[\tau]\text{ hyperelliptic}}\Fix([\tau]),$$ where the union is taken over all hyperelliptic mapping classes $[\tau]\in\Mod{g}$. If $[X,h]\in\Fix([\tau])$ then $\tau:X\to X$ is isotopic to a biholomorphism $\tau_b$. The map $\tau_b$ must be a hyperelliptic involution, and so $[X,h]\in\cal H_g$. Conversely, if $[X,h]\in\cal H_g$ then there is a hyperelliptic involution $\tau:X\to X$ which is a biholomorphism and so $[X,h]\in\Fix([\tau])$.

If $[\tau]$ and $[\eta]$ are two distinct hyperelliptic mapping classes, then $\Fix([\tau])\cap\Fix([\eta])=\varnothing$. More explicitly, if $[X,h]\in\Fix([\tau])\cap\Fix([\eta])$ then, $[\tau]$ and $[\eta]$ contain biholomorphic representatives $\tau_b,\eta_b:X\to X$. By \cite[Section III.7.9, Corollary 2]{FK}, we must have $\tau_b=\eta_b$.

Each set $\Fix([\tau])$ is totally geodesic in $\T{g}$. This follows from the uniqueness of geodesics in the WP metric: if $\gamma$ is any geodesic with endpoints lying in $\Fix([\tau])$, then $[\tau]\cdot\gamma$ must be another geodesic with the same endpoints as $\gamma$, hence $\gamma$ must be fixed by $\tau$.

For a proof that $\cal H_g$ is a complex-analytic submanifold of $\T{g}$ and that each component is biholomorphic to $\T{0,2g+2}$, we refer the reader to \cite[Section 4.1.5]{Nag}.
\end{proof}

\begin{figure}[h]
\includegraphics[scale=0.6]{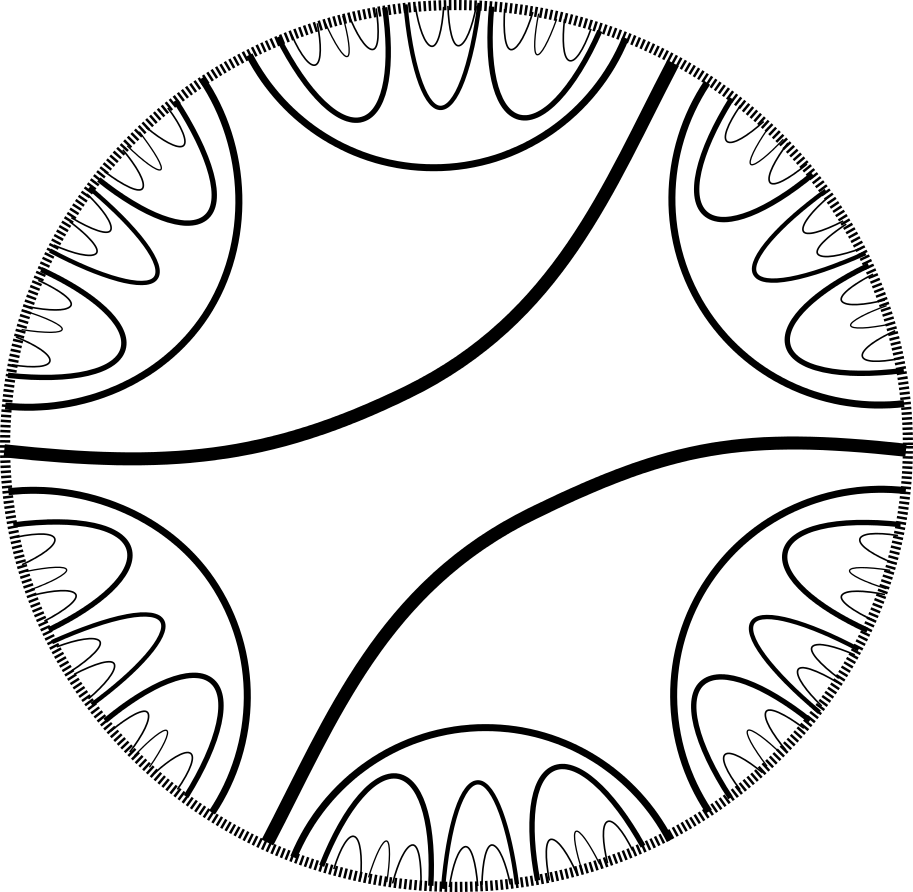}
	\caption{A schematic of the hyperelliptic locus $\cal H_g$ in $\T{g}$. The submanifold $\cal H_g\subset\T{g}$ has infinitely many connected components, each of which is totally geodesic with respect to the Weil-Petersson metric.}
\end{figure}

\section{Homotopy Type of the Hyperelliptic Complement}\label{HyperComplement}

In Section 3.1, we prove, Lemma \ref{Lem1}, the existence of certain Morse functions on $\T{g}$. These functions will be used to prove Theorem \ref{M2} in Section 3.2.

\subsection{Geodesic Length Functions}

This section is devoted to proving the existence of sufficiently well-behaved functions on $\T{g}$.

\begin{lem}\label{Lem1}
Let $g\ge 3$. There exists a function $f:\T{g}\to\R_+$ which satisfies the following properties.
\begin{enumerate}
\item The function $f$ is proper, strictly convex and has positive-definite Hessian everywhere.
\item The function $f$ has a unique critical point in $\T{g}$, denoted $x_0$.
\item For any component $H$ of $\cal H_g$, the restriction $f|_H$ has a unique critical point, denoted $x_H$.
\item Any two critical values are distinct. That is, for any component $H$ of $\cal H_g$, $f(x_H)\not=f(x_0)$. Also, if $H'$ is any other component of $\cal H_g$, then $f(x_H)=f(x_{H'})$ if and only if $H=H'$.
\item The set of critical values $$\{f(x_H):H\text{ is a component of }\cal H_g\}\cup\{f(x_0)\}$$ is a discrete subset of $\R_+$.
\end{enumerate}
In particular, such a function $f$ is Morse on $\T{g}$ and for each component $H$ of $\cal H_g$, the restriction $f|_H$ is Morse.
\end{lem}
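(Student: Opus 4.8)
The plan is to construct $f$ out of \emph{geodesic length functions} and to run everything off the Weil--Petersson convexity recalled in Section~\ref{WPmetric}. The key external input I would invoke is Wolpert's theorem that for each isotopy class of essential simple closed curve $\gamma$ the geodesic length function $\ell_\gamma:\T{g}\to\R_+$ is strictly convex along WP geodesics; since WP geodesics have vanishing acceleration, strict convexity along every geodesic in every direction is equivalent to $\ell_\gamma$ having positive-definite Hessian. I would fix a finite \emph{filling} system $\gamma_1,\dots,\gamma_k$, chosen large enough that $X\mapsto(\ell_{\gamma_1}(X),\dots,\ell_{\gamma_k}(X))$ is injective on $\T{g}$ (finitely many length functions separate points), and work with the family $f_{\vec t}=\sum_{i=1}^k t_i\,\ell_{\gamma_i}$ for weights $\vec t\in(0,\infty)^k$. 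Each $f_{\vec t}$ is a positive combination of functions with positive-definite Hessian, hence is itself strictly convex with positive-definite Hessian; and because the $\gamma_i$ fill, a bound on $\sum t_i\ell_{\gamma_i}$ bounds every $\ell_{\gamma_i}$, which keeps the marked hyperbolic structure in a compact part of $\T{g}$, so $f_{\vec t}$ is proper. This gives property~(1) for every $\vec t$, and at the very end I will select one good weight vector and set $f:=f_{\vec t}$.

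Properties~(2) and~(3) are then formal consequences of strict convexity, properness, and the geodesic convexity of $\T{g}$. A proper, strictly convex, $\R_+$-valued function attains a unique minimum $x_0$, and it can have no second critical point: restricting to the unique WP geodesic from $x_0$ to a hypothetical second critical point would give a strictly convex one-variable function with critical points at both ends, which is impossible. For~(3), each component $H$ of $\cal H_g$ is a closed, totally geodesic, contractible submanifold by Proposition~\ref{locus}; being totally geodesic, its geodesics are WP geodesics of $\T{g}$, so $f_{\vec t}|_H$ remains strictly convex with positive-definite Hessian, and it is proper since $H$ is closed, whence $f_{\vec t}|_H$ has a unique critical point $x_H$. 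Since every critical point in sight is a nondegenerate minimum, the final ``Morse'' assertion is immediate.

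Next I would establish that the components of $\cal H_g$ form a \emph{locally finite} family, which drives property~(5). Since $\Mod{g}$ acts properly discontinuously and distinct hyperelliptic classes have disjoint fixed loci (Proposition~\ref{locus}), a sequence of points converging in $\T{g}$ but lying in infinitely many distinct components $\Fix([\tau_n])$ would force infinitely many distinct $[\tau_n]$ to carry a fixed compact neighborhood of the limit into itself, contradicting proper discontinuity. Local finiteness together with properness yields~(5) for every $\vec t$: the compact sublevel set $f_{\vec t}^{-1}([0,c])$ meets only finitely many components, so only finitely many critical values lie below $c$, and the full set of critical values is discrete and tends to $\infty$.

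The remaining point, property~(4), is the main obstacle, and I expect it to be the crux: one must separate the infinitely many critical values while keeping the global strict convexity intact. I would do this by a genericity argument in $\vec t$. First restrict to the open dense set $U=\{\vec t: x_0(\vec t)\notin\cal H_g\}$, which is nonempty because $\cal H_g$ has positive codimension for $g\ge 3$; on $U$ the global minimizer is distinct from every $x_H$. Indexing the countably many components as $H_1,H_2,\dots$, I consider, for each pair, the real-analytic function $\vec t\mapsto f_{\vec t}(x_{H_j}(\vec t))-f_{\vec t}(x_{H_\ell}(\vec t))$ (and the analogous one comparing $x_{H_j}$ with $x_0$), using that minimizers depend real-analytically on $\vec t$ by the analytic implicit function theorem. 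By the envelope theorem, because $x_{H_j}(\vec t)$ is a critical point of the restriction $f_{\vec t}|_{H_j}$, the variation of $x_{H_j}$ contributes nothing and $\partial_{t_i}f_{\vec t}(x_{H_j}(\vec t))=\ell_{\gamma_i}(x_{H_j}(\vec t))$; hence the $\vec t$-gradient of the difference is the vector $\big(\ell_{\gamma_i}(x_{H_j})-\ell_{\gamma_i}(x_{H_\ell})\big)_{i}$. Since the chosen length functions separate points and $x_{H_j}\neq x_{H_\ell}$ always (they lie in disjoint components, respectively off $\cal H_g$), this gradient is nowhere zero, so each difference is a non-constant real-analytic function whose zero set is nowhere dense. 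There are only countably many such coincidence loci, so by the Baire category theorem a generic $\vec t\in U$ avoids them all simultaneously, giving pairwise-distinct critical values and hence~(4). It is precisely this envelope-theorem computation --- showing that turning the weights genuinely moves the critical values relative to one another --- that makes the countable Baire argument go through without disturbing the convexity properties secured in the first two paragraphs.
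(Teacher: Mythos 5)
Your proposal follows essentially the same route as the paper: a weighted sum $\sum_\alpha c_\alpha\ell_\alpha$ of geodesic length functions over a filling system, Wolpert's positive-definite Hessian plus Kerckhoff-style properness for (1), strict convexity along WP geodesics together with the total geodesy of the components of $\cal H_g$ for (2)--(3), and a Baire-category genericity argument in the weight vector for (4)--(5). If anything, your write-up is more detailed at the crux than the paper's, which merely asserts that the coincidence-avoiding sets of weights are open and dense, whereas you justify density via analytic dependence of the minimizers and the envelope-theorem computation, and you derive (5) from local finiteness of the components (via proper discontinuity) where the paper argues more briefly from disjoint neighborhoods and properness.
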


\begin{proof}
The function $f$ is built using {\it geodesic length functions}. These functions are defined as follows. Let $\alpha$ be a free homotopy class of simple closed curves on $\Sigma_g$ and let $[X,h]$ be a point in $\T{g}$. Then $h(\alpha)$ is a free homotopy class of simple closed curves in $X$. Recall that $h(\alpha)$ contains a unique geodesic $\gamma$. The {\it geodesic length function} $\ell_\alpha:\T{g}\to\R_+$ associated to $\alpha$ is defined by $$\ell_\alpha(\cal X):=\text{length of the unique geodesic in the free homotopy class $h(\alpha)$ on $X$},$$ where $\cal X=[X,h]$. Any other choice $(X',h')$ of representative of $[X,h]$ would differ from $(X,h)$ by an isometry, hence $\ell_\alpha$ is well-defined. Fix a finite collection $\cal A$ of (homotopy classes of) simple closed curves which fills $\Sigma_g$, and let ${\bc}=(c_\alpha)\in\R_+^\cal A$ be a collection of positive real numbers for each $\alpha\in\cal A$. For each choice of ${\bc}\in\R_+^\cal A$, there is a function $$\cal L_{\cal A,{\bc}}:=\sum_{\alpha\in\cal A}c_\alpha\ell_\alpha:\T{g}\to\R_+.$$ The function $f$ in the statement of the theorem will be defined to be $\cal L_{\cal A,\bc}$ for a specific value of $\bc$.

Wolpert \cite[Theorem 4.6]{Wol} states that for any free homotopy class of simple closed curves $\alpha$ on $\Sigma_g$, the geodesic length function $\ell_\alpha$ has positive-definite Hessian everywhere. In particular, $\ell_\alpha$ is strictly convex along WP geodesics.

Recall that the Hessian operator $\Hess$ is given in local coordinates by $$f\mapsto \left(\frac{\partial^2f}{\partial x^i\partial x^j}+\Gamma^k_{ij}\frac{\partial f}{\partial x^k}\right)dx^i\otimes dx^j,$$ where $\Gamma_{ij}^k$ are the Christoffel symbols given by $g$. Thus, ${\rm Hess}$ is $\R$-linear. It follows that $$\Hess\cal L_{\cal A,{\bc}}=\sum_{\alpha\in\cal A}c_\alpha\cdot\left(\Hess\ell_\alpha\right).$$ For any $v\in T_\cal X\T{g}$, $$\Hess\cal L_{\cal A,{\bc}}(v,v)=\sum_{\alpha\in\cal A}c_\alpha\cdot\left(\Hess\ell_\alpha\right)(v,v)>0$$ and so $\Hess\cal L_{\cal A,{\bc}}$ is positive-definite. This also shows that $\cal L_{\cal A,{\bc}}$ is strictly convex.

Let ${\bf 1}$ denote the element of $\R_+^\cal A$ such that $c_\alpha=1$ for all $\alpha\in\cal A$. For $\bc=(c_\alpha)\in\R_+^\cA$, let $c_{min}:=\min_{\alpha\in\cal A}c_\alpha$. Then, $$c_{min}\cal L_{\cal A,{\bf 1}}\le\cal L_{\cal A,\bc}.$$ Kerckhoff \cite[Lemma 3.1]{Ker} states that the functions $\cal L_{\cal A,{\bf 1}}$ are proper. If $K=[a,b]\subset\R_+$ is compact, then $$(\cal L_{\cal A,\bc})^{-1}(K)\subset (\cal L_{\cal A,{\bf 1}})^{-1}\left[0,b/c_{min}\right],$$ so $(\cal L_{\cal A,\bc})^{-1}(K)$ is a closed subset of a compact set, hence is compact. Thus, $\cal L_{\cal A,\bc}$ is proper. This proves (1) in the statement of the theorem.

If $\cal L_{\cal A,\bc}$ has distinct critical points $x_0$ and $x_0'$ in $\T{g}$, then these are local minima of $\cal L_{\cal A,\bc}$ since $\Hess\cal L_{\cal A,\bc}$ is positive definite at both $x_0$ and $x_0'$. Without loss of generality, assume $\cal L_{\cal A,\bc}(x_0')\le\cal L_{\cal A,\bc}(x_0)$. However, by strict convexity, this is impossible. Let $\gamma$ be the unique geodesic with $\gamma(0)=x_0$ and $\gamma(1)=x_0'$. Then $$\cal L_{\cal A,\bc}(\gamma(t))< (1-t)\cal L_{\cal A,\bc}(x_0)+t\cal L_{\cal A,\bc}(x_0')\le\cal L_{\cal A,\bc}(x_0)$$ for all $t\in(0,1]$, contradicting the fact that $x_0$ must be a local minimum. Hence $x_0=x_0'$ and $\cal L_{\cal A,\bc}$ has a unique critical point in $\T{g}$, denoted $x_0$. This proves property (2).

Since the components of $\cal H_g$ are totally geodesic in the WP metric, the same argument shows that the restriction $\cal L_{\cal A,\bc}|_H$ will have a unique critical point, denoted $x_H$, for each component $H$ of $\cal H_g$. This proves property (3) of the theorem. Thus, properties (1) through (3) of the theorem above are satisfied by the function $\cal L_{\cal A,\bc}$ for any value of $\bc$.

Let $S=\{H: H\text{ is a component of }\cal H_g\}\cup\{0\}$. For each pair $i,j\in S$ of distinct elements, there is an open dense subset $U_{i,j}$ of $\R_+^\cal A$ given by $$U_{i,j}=\left\{\bc\in\R_+^\cal A:\cal L_{\cal A,\bc}(x_i)\not=\cal L_{\cal A,\bc}(x_j)\right\}.$$ By the Baire Category Theorem, $\bigcap_{i\not=j}U_{i,j}$ is open and dense in $\R_+^\cal A$. Let $\bc'\in\bigcap_{i\not=j}U_{i,j}$. We now define $f:=\cal L_{\cal A,\bc'}$. Then, $f$ satisfies property (4).

Lastly, we wish to show that $f(S)$ is discrete. Choose a neighborhood $U_0$ of $x_0$ and $U_H$ of $x_H$, for each component $H$ of $\cal H_g$ which are mutually disjoint. Properness of $f$ then implies that $f(S)$ is discrete. This shows that $f$ satisfies property (5).
\end{proof}

\subsection{Relative Morse theory of the pair $(\T{g},\cal H_g)$}

The goal of this subsection is to prove Theorem \ref{M2}. The idea is that the Morse function $f$ found in Lemma \ref{Lem1} may be used to determine a handle decomposition of both $\cal H_g$ and $\T{g}\setminus\cal H_g$. For a reference on relative Morse theory, see e.g. \cite[Section 3]{Sharpe}.

\begin{namedtheorem}[Theorem \ref{M2}]
Let $g\ge 3$. The hyperelliptic complement $\T{g}\setminus\cal H_g$ has the homotopy type of a wedge $\displaystyle\bigvee_{i=1}^\infty S^n$ of infinitely many $n$-spheres, where $n=2g-5$.
\end{namedtheorem}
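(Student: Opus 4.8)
The plan is to use the Morse function $f = \cal L_{\cal A,\bc'}$ from Lemma \ref{Lem1} to understand the pair $(\T{g}, \cal H_g)$ via relative Morse theory, and then to dualize to obtain the homotopy type of the complement $\T{g}\setminus\cal H_g$. The starting observation is that $f$ is a proper, strictly convex Morse function with a single critical point $x_0$ on all of $\T{g}$. Since $\T{g}$ is diffeomorphic to $\R^{6g-6}$ and $f$ is proper and strictly convex with positive-definite Hessian, $x_0$ is its unique (global) minimum; the sublevel sets $f^{-1}([0,c])$ are therefore compact, convex, and either empty or contractible. In particular $\T{g}$ retracts to $x_0$, consistent with its contractibility. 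The real content is in how the hyperelliptic locus $\cal H_g$ sits inside these sublevel sets as $c$ increases.

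The key step is to analyze, component by component, what happens as a regular value $c$ crosses one of the critical values $f(x_H)$. By Lemma \ref{Lem1}, property (4), the critical values $f(x_H)$ over the (countably many) components $H$ are all distinct from each other and from $f(x_0)$, and by property (5) they form a discrete set; so I may increase $c$ past one $f(x_H)$ at a time. First I would record that each component $H$ is totally geodesic and biholomorphic to $\T{0,2g+2}$, hence contractible, and that the restriction $f|_H$ is itself a proper strictly convex Morse function on $H$ with its single minimum at $x_H$. The relative Morse theory of the pair thus reduces to a single model calculation: near $x_H$, choose normal coordinates splitting $T_{x_H}\T{g} = T_{x_H}H \oplus \nu$, where $\dim_\R H = 2(2g-1)$ and the normal bundle $\nu$ has real dimension $(6g-6) - (4g-2) = 2g-4$. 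Because $f|_H$ has a minimum at $x_H$ and $f$ has positive-definite Hessian in all directions, $x_H$ contributes, for the pair $(\T{g},\cal H_g)$ or rather for the complement, a handle whose index is governed by the normal directions. The crossing attaches a cell of dimension $2g-4$ to (a space homotopy equivalent to) the complement built so far, and the boundary sphere of the co-core is $S^{2g-5}$.

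From this local model the global conclusion should follow by assembling the crossings. Here is the structure I would use: write $\T{g}\setminus\cal H_g$ as an increasing union of the open sets $V_c := f^{-1}([0,c))\setminus\cal H_g$, and show that $V_c$ changes its homotopy type only when $c$ passes a critical value $f(x_H)$, at which point a single $n$-sphere, $n = 2g-5$, is wedged on. Below the smallest critical value the complement $V_c$ is obtained by deleting a contractible (indeed empty-below-$x_H$) piece from a contractible sublevel set and is contractible; each subsequent crossing of an $f(x_H)$ removes a small normal $(2g-4)$-disk from the growing contractible region, and removing a $(2g-4)$-disk from the interior of a $(6g-6)$-manifold has the homotopy effect of wedging on a sphere of dimension $(6g-6)-(2g-4)-1 = 2g - 5 = n$. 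Since there are countably infinitely many components $H$, and a colimit argument (using that every compact set lies in some $V_c$, so $\pi_*$ and $H_*$ commute with the direct limit) passes to the union, one obtains $\T{g}\setminus\cal H_g \simeq \bigvee_{i=1}^\infty S^n$ with $n = 2g-5$.

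The main obstacle I anticipate is making the local ``removing a normal disk wedges on a sphere'' step rigorous and checking that the attaching maps are nullhomotopic, so that the result is genuinely a wedge rather than some twisted union. This requires care with the codimension: the complement of a properly embedded contractible submanifold of real codimension $k = 2g-4$ in the contractible space $\T{g}$ deformation retracts onto something built from the normal sphere bundles, and one must verify the normal bundle of $\cal H_g$ is trivial (or at least that orientation/triviality issues do not obstruct the sphere being unknottedly linked). The hypothesis $g \ge 3$, which gives $n = 2g - 5 \ge 1$, is exactly what ensures the normal spheres are positive-dimensional and connected, and that the links assemble into a wedge; I would isolate this as the technical heart of the argument, invoking the totally geodesic structure from Proposition \ref{locus} to guarantee the components do not accumulate inside any sublevel set and that each crossing is a clean, transverse local modification.
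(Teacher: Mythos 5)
Your strategy coincides with the paper's: take the function $f=\cal L_{\cal A,\bc'}$ of Lemma \ref{Lem1}, use properties (4) and (5) to cross the critical values $f(x_H)$ one at a time, analyze a local model at each crossing, and assemble the infinitely many crossings by a colimit argument. However, the local model --- which is the heart of the proof --- is internally inconsistent as you have written it, and you land on $n=2g-5$ only because two errors cancel. What is deleted from the sublevel set $X_{c+\epsilon}$ as $c$ crosses $f(x_H)$ is not a ``normal $(2g-4)$-disk'': it is $X_{c+\epsilon}\cap H$, a disk inside the hyperelliptic component $H$ itself, of dimension $\dim_\R H=4g-2$ (namely the sublevel set of $f|_H$ near its minimum $x_H$). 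The correct linking count is then $(6g-6)-(4g-2)-1=2g-5$. Your count $(6g-6)-(2g-4)-1$ does not equal $2g-5$; it equals $4g-3$, and removing an actual $(2g-4)$-disk would wedge on an $S^{4g-3}$, contradicting the theorem. Likewise, the crossing attaches a handle of index $0+(2g-4)-1=2g-5$ (index of $f|_H$ at $x_H$, plus the real codimension of $H$, minus one), not a cell of dimension $2g-4$: attaching $(2g-4)$-cells to a contractible space would produce a wedge of $(2g-4)$-spheres, again the wrong answer.

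The repair is exactly what the paper does via the relative Morse coordinates of \cite[3.3]{Sharpe}: near $x_H$ there are coordinates $(u,y)\in\R^{2g-4}\times\R^{4g-2}$ with $U\cap H=\{u=0\}$ and $f=c+\|y\|^2$ on $U\cap H$, and crossing $c$ adjoins to $X_{c-\epsilon}\setminus\cal H_g$ a tubular neighborhood of the normal linking sphere $\{(u,0):\|u\|^2=\delta\}\cong S^{2g-5}$, i.e.\ a $(2g-5)$-handle. With the bookkeeping corrected, your two anticipated obstacles essentially evaporate: no global triviality of the normal bundle of $\cal H_g$ is needed, because each modification takes place in a single such chart (the critical values being distinct and discrete, by Lemma \ref{Lem1}(4)--(5)), and the attaching maps are automatically null-homotopic --- for $n=2g-5\ge 2$ because a wedge of $n$-spheres is $(n-1)$-connected, and for $g=3$ (so $n=1$) because attaching $1$-cells to a path-connected space gives a wedge of circles up to homotopy. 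Your colimit argument for passing from finitely many crossings to the full complement is fine and is implicitly the paper's final step.
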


Note that since every curve of genus $g=2$ is hyperelliptic, $\T{2}\setminus\cal H_2=\varnothing$. The proof of Theorem \ref{M2} is similar to Mess's proof that the image of the period mapping on $\T{2}$ has the homotopy type of an infinite wedge of circles \cite[Proposition 4]{Mess}. We now prove Theorem \ref{M2}.

\begin{proof}
The idea behind relative Morse theory is that such a function as given by Lemma \ref{Lem1} can be used to determine a handle decomposition not only of $\cal H_g$, but of its complement $\T{g}\setminus\cal H_g$. Let $f$ be the function that satisfies the conclusion of Lemma \ref{Lem1}. We let $x_0$ denote the unique minimum point of $f$ in $\T{g}$. For each component $H$ of $\cal H_g$, let $x_H$ denote the unique critical point of $f|_H$. We refer to $x_0$ as a {\it critical point of $f$ of type I} and each $x_H$ are referred to as {\it critical points of $f$ of type II}. The values $c_0=f(x_0)$ and $c_H=f(x_H)$ are called {\it critical values of type I} and {\it II}, respectively. 

For $r$ a real number, let $X_r:=\{\cal X\in\T{g}: f(\cal X)\le r\}$. If $(c_0,c_0+\epsilon]$ contains no type II critical values, then $X_{c_0+\epsilon}\setminus\cal H_g$ is diffeomorphic to a 0-handle, i.e. a closed ball. Consider an arbitrary interval $[a,b]\subset\R$. If $[a,b]$ contains no critical value of type I or II of $f$, then $X_a\setminus\cal H_g$ is diffeomorphic to $X_b\setminus\cal H_g$. To see this, we can construct a vector field $V$ which is equal to $\grad(f)$ outside a neighborhood of $\cal H_g$ and such that $V|_{\cal H_g}$ is equal to $\grad(f|_{\cal H_g})$. The flow along this vector field gives the required diffeomorphism.

Let $x$ be a critical point of type II, and let $c=f(x)$. By Lemma \ref{Lem1}, the set of critical values of $f$ is discrete, so there exists some $\epsilon>0$ such that $[c-\epsilon,c+\epsilon]$ contains no other critical values of $f$. We wish to show that $X_{c+\epsilon}\setminus\cal H_g$ is diffeomorphic to $X_{c-\epsilon}\setminus\cal H_g$ with an $n$-handle attached, where $n=2g-5$ (see Figure 2).

Let $H$ be the component of $\cal H_g$ containing $x$. There exists a coordinate system $(u,y)\in \R^{2g-4}\times\R^{4g-2}$ in a neighborhood $U$ of $x$ such that \cite[3.3]{Sharpe}
\begin{enumerate}[topsep=0pt,itemsep=-1ex,partopsep=1ex,parsep=1ex]
\item $U\cap H$ is given by $u=0$,
\item $f=c+\|y\|^2$ on $U\cap H$.
\end{enumerate}
The coordinates $y$ are ``tangent'' coordinates to $H$ and the coordinates $u$ are ``normal'' coordinates to $H$. Note that since $H$ has complex dimension $2g-1$, it has real dimension $4g-2$.

\begin{figure}[h]
\includegraphics[scale=0.75]{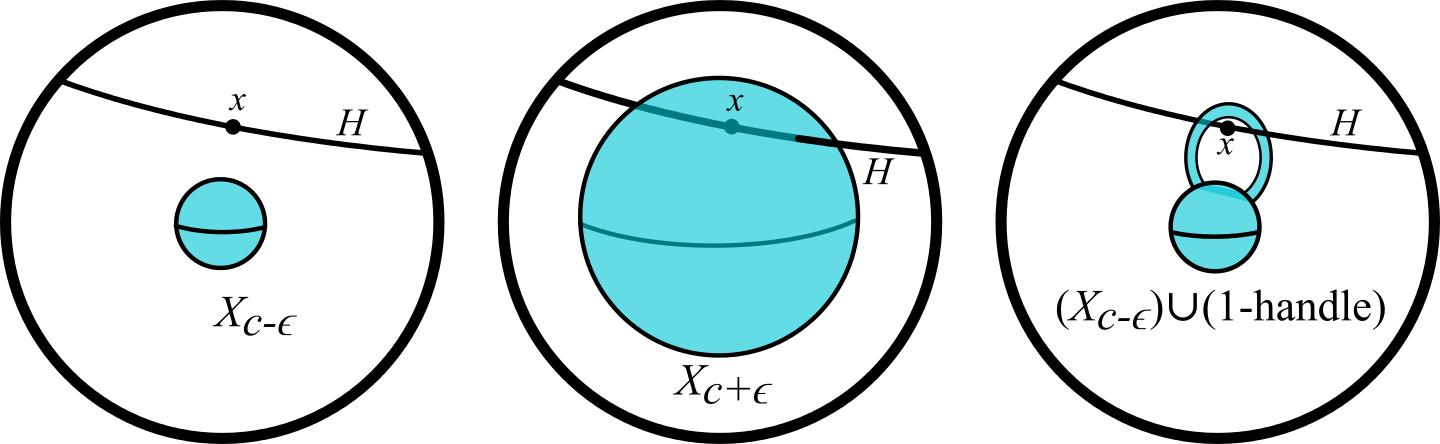}
	\caption{Start with $X_{c-\epsilon}$. As $c-\epsilon$ increases to $c+\epsilon$, the level set $X_{c+\epsilon}$ intersects exactly one more component $H$ of $\cal H_g$, the component containing the critical point $x$.}
\end{figure}

Then, $X_{c+\epsilon}\setminus\cal H_g$ is diffeomorphic to the union of $X_{c-\epsilon}\setminus\cal H_g$ and a tubular neighborhood of $$\{(u,0): \|u\|^2=\delta\},$$ for some small $\delta>0$. This tubular neighborhood deformation retracts to the $(2g-5)$-sphere $\{(u,0): \|u\|^2=\delta\}$. Hence, $\T{g}\setminus\cal H_g$ has a handle decomposition consisting of a 0-handle with infinitely many (one for each component of $\cal H_g$) $n$-handles attached, where $n=2g-5$.
\end{proof}

Let $\cal M_g^{nhyp}$ denote the moduli space of hyperelliptic curves of genus $g$. Since $\T{3}\setminus\cal H_3$ is a covering space for $\cal M_3^{nhyp}$, the moduli space $\cal M_3^{nhyp}$ has contractible universal cover and $\cal M_3^{nhyp}$ is aspherical. If $g\ge 4$ then $\pi_n(\cal M_g^{nhyp})$, where $n=2g-5>1$, is an infinite rank abelian group. In particular, $\cal M_g^{nhyp}$ is not aspherical for $g\ge 4$.

We can be even more precise. The components of the hyperelliptic locus $\cal H_g$ are enumerated by the set of cosets of the group $\HH{g}$ in $\Mod{g}$. Recall that $\HH{g}$ is the centralizer in $\Mod{g}$ of a fixed hyperelliptic involution $\tau\in\Mod{g}$. The group $\HH{g}$ is called the {\it hyperelliptic mapping class group of genus $g$}. If $\eta$ is another hyperelliptic involution, then the centralizers of $\tau$ and $\eta$ are conjugate in $\Mod{g}$.

\begin{cor}\label{HomotopyEquiv}
Let $g\ge 3$. There is a homotopy equivalence $$\T{g}\setminus\cal H_g\cong\bigvee_{[h]\in\Mod{g}/\HH{g}}\S^{2g-5}.$$ In particular, $$H_{2g-5}(\T{g}\setminus\cal H_g;\Z)\cong\Z[\Mod{g}/\HH{g}]$$ as $\Mod{g}$-modules.
\end{cor}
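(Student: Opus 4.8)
The plan is to deduce this corollary from Theorem \ref{M2} by keeping track of \emph{which} component of $\cal H_g$ contributes each sphere, and then identifying the resulting index set, together with its $\Mod{g}$-action, with the coset space $\Mod{g}/\HH{g}$. Recall from the proof of Theorem \ref{M2} that $\T{g}\setminus\cal H_g$ admits a handle decomposition with a single $0$-handle and one $n$-handle ($n=2g-5$) for each component $H$ of $\cal H_g$; since $n\ge 1$ when $g\ge 3$, such a complex is homotopy equivalent to a wedge of $n$-spheres, one for each component of $\cal H_g$. Thus the first assertion reduces to producing a $\Mod{g}$-equivariant bijection between the components of $\cal H_g$ and the cosets $\Mod{g}/\HH{g}$.

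To produce this bijection, recall from Proposition \ref{locus} that the components of $\cal H_g$ are exactly the fixed loci $\Fix([\tau])$ as $[\tau]$ ranges over hyperelliptic mapping classes, and that distinct hyperelliptic classes give disjoint fixed loci; hence components of $\cal H_g$ correspond bijectively to hyperelliptic mapping classes in $\Mod{g}$. The group $\Mod{g}$ acts on this set by conjugation, and since $[\phi]\cdot\Fix([\tau])=\Fix([\phi][\tau][\phi]^{-1})$, this conjugation action agrees with the permutation action of $\Mod{g}$ on the components. I would then invoke two facts: first, that all hyperelliptic involutions in $\Mod{g}$ are conjugate (their quotient orbifolds are all $\P^1$ with $2g+2$ branch points, and a homeomorphism matching branch data lifts to a conjugating mapping class), so the set of hyperelliptic classes is a single $\Mod{g}$-orbit; and second, that the stabilizer of $[\tau]$ under conjugation is, by definition, its centralizer $\HH{g}$. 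Orbit--stabilizer then yields the desired equivariant bijection, giving the displayed homotopy equivalence.

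For the module structure I would exhibit explicit generators that are visibly permuted by $\Mod{g}$. Since each component $H$ is contractible (Proposition \ref{locus}), its normal bundle in $\T{g}$ is trivial of real rank $2g-4$, and a small sphere $S_H$ in a single normal fiber is a $(2g-5)$-sphere lying in $\T{g}\setminus\cal H_g$. This is exactly the sphere $\{(u,0):\|u\|^2=\delta\}$ produced near the critical point $x_H$ in the proof of Theorem \ref{M2}, so the classes $[S_H]$, one per component $H$, form a $\Z$-basis of $H_{2g-5}(\T{g}\setminus\cal H_g;\Z)$. The class $[S_H]$ is independent of the chosen normal fiber, as $H$ is connected with trivial normal bundle; consequently any $[\phi]\in\Mod{g}$ carries $[S_H]$ to $[S_{\phi(H)}]$. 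Because $\Mod{g}$ acts biholomorphically, it preserves the complex orientation of the (complex) normal bundle of $\cal H_g$, so this correspondence carries no sign, and the action on $H_{2g-5}$ is the honest permutation representation on the set of components. Combined with the equivariant bijection of the previous step, this gives $H_{2g-5}(\T{g}\setminus\cal H_g;\Z)\cong\Z[\Mod{g}/\HH{g}]$ as $\Mod{g}$-modules.

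The main obstacle is that the Morse function $f$ of Lemma \ref{Lem1} is not $\Mod{g}$-invariant—its coefficients were perturbed to separate critical values—so the handle decomposition, and hence the homotopy equivalence of Theorem \ref{M2}, is \emph{not} equivariant, and one cannot simply transport the $\Mod{g}$-action through it. The point I want to get right is therefore to describe the top-degree generators intrinsically, via the normal linking spheres $S_H$, rather than as abstract handle cores, and to verify that these geometric classes coincide with the basis produced by Theorem \ref{M2}. Once that matching is in place, equivariance of the generating set is immediate from the geometry and the module identification follows.
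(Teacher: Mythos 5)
Your proof is correct, and its core --- the orbit--stabilizer identification of the components of $\cal H_g$ with $\Mod{g}/\HH{g}$, using that distinct hyperelliptic classes have disjoint fixed loci (so the stabilizer of a component $\Fix([\tau])$ is the centralizer $\HH{g}$) and that all hyperelliptic involutions are conjugate (so the action on components is transitive) --- is exactly the paper's argument for this corollary. Where you go beyond the paper: its proof stops at the component bijection and never addresses the point you flag, namely that the Morse function of Lemma \ref{Lem1} is not $\Mod{g}$-invariant, so the homotopy equivalence of Theorem \ref{M2} cannot simply be transported along the group action; the equivariant identification of $H_{2g-5}$ with the permutation module therefore requires intrinsic generators. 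Your normal linking spheres $S_H$, together with the observation that the $\Mod{g}$-action is biholomorphic and hence preserves the complex orientation of the normal bundle (so no signs appear in $[\phi]_*[S_H]=[S_{\phi(H)}]$), supply exactly this missing step. The paper only gestures at it later, in the proof of Theorem \ref{M1}, where for $g=3$ the generators of $H_1(\T{3}\setminus\cal H_3;\Z)$ are described as meridians around the components of $\cal H_3$; your argument is the correct generalization of those meridians to arbitrary $g\ge 3$. In short: same approach, but your write-up is the more complete version.
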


\begin{proof}
The mapping class group $\Mod{g}$ acts on the set of components of $\cal H_g$ by permutations. Then, there is a map \begin{align*}\Orb(H_0)&\to \Mod{g}/\Stab(H_0)\\ h\cdot H_0&\mapsto h\Stab(H_0)\end{align*} from the orbit $\Orb(H_0)$ of $H_0$ to the left coset space of the stabilizer $\Stab(H_0)$. It suffices to show that $\Stab(H_0)=\HH{g}$ and $\Mod{g}$ acts transitively on the set of components of $\cal H_g$.

First, since $H_0=\Fix(\tau)$, the mapping class $h\in\Stab(H_0)$ if and only if $$h\cdot\Fix(\tau)=\Fix(h\tau h^{-1})=\Fix(\tau).$$ Since no hyperelliptic curve can have two distinct hyperelliptic involutions, it must follow that $h\tau h^{-1}=\tau$ so $h\in\HH{g}$. Therefore, $\Stab(H_0)=\HH{g}$.

Secondly, if $H$ is any other component of $\cal H_g$, then $H=\Fix(\eta)$ for some hyperelliptic involution $\eta\in\Mod{g}$. Since hyperelliptic involutions in $\Mod{g}$ are conjugate, there exists some $h\in\HH{g}$ such that $$H=\Fix(\eta)=\Fix(h\tau h^{-1})=h\cdot\Fix(\tau)=h\cdot H_0.$$ Therefore, $\Mod{g}$ acts transitively on the set of components of $\cal H_g$.
\end{proof}

\section{The Parameter Space of Smooth Plane Curves}\label{section:PlaneCurves}

In this section, we prove Proposition \ref{principal}, showing that the cover of $\cal U_d$ determined by the subgroup $K_d$ of $\pi_1(\cal U_d)$ carries the structure of a principal fiber bundle. This will be critical in the proof of Theorem \ref{M1} in Section 4.2.

\subsection{Covers of $\cal U_d$ and principal fiber bundles}
The main result of this subsection is to prove Proposition \ref{principal}, exhibiting a cover of $\cal U_d$ as a principal fiber bundle over a certain subspace of $\T{g}$.

Associating each point of $\cal U_d$ to the curve it determines gives rise to a map $\varphi_d:\cal U_d\to\M_g$ into the moduli space of Riemann surfaces of genus $g(d)$, where $g=g(d):=\binom{d-1}{2}$ by the degree-genus formula. Let $\M_g^{pl}$ denote the image of this map. For $d\ge 4$, the locus $\M_g^{pl}\subsetneq\M_g$ and for $d=3$, $\M_1^{pl}=\M_1$.

There is a (disconnected) covering $\cal U_d^{mark}$ of $\cal U_d$ defined as follows. A point $(f,[h])\in\cal U_d^{mark}$ is an ordered pair consiting of $f\in\cal U_d$ and a homotopy class $[h]$ of orientation-preserving homeomorphisms $h:\Sigma_g\to V(f)$ of some fixed $\Sigma_g$ with the complex curve $V(f)$ given by $f(x,y,z)=0$.

Let $\pi_1(\cal U_d^{mark})$ be the fundamental group of a chosen component of $\cal U_d^{mark}$. Note that $\pi_1(\cal U_d^{mark})\cong K_d$.

\begin{rem}
There is a commutative diagram
\begin{diagram}
\cal U_d^{mark}		&\rTo^{\tilde{\varphi}_d}		&\T{g}\\
\dTo				&						&\dTo_\pi\\
\cal U_d			&\rTo_{\varphi_d}				&\M_g
\end{diagram}
The map $\varphi_d:\cal U_d\to\cal M_g$ lifts to a map $\tilde{\varphi}_d:\cal U_d^{mark}\to\T{g}$ into Teich\"muller space defined by $$\varphi_d: (f,[h])\mapsto [V(f),h].$$ Let $\T{g}^{pl}$ denote the image of $\varphi_d$.
\end{rem}

Recall that a {\it principal $G$-bundle} is a fiber bundle $P\to X$ with a $G$-action that acts freely and transitively on the fibers.

\begin{prop}\label{principal}
For $d\ge 4$, the map $\tilde\varphi_d:\cal U_d^{mark}\to\T{g}^{pl}$ is a principal $\PGL_3(\C)$-bundle.
\end{prop}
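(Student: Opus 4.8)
The plan is to exhibit $\PGL_3(\C)$ as the structure group acting on the fibers of $\tilde\varphi_d$, and then verify the two defining properties of a principal bundle: that the action is free and transitive on fibers, and that the map is locally trivial. The natural action to consider comes from the fact that $\PGL_3(\C) = \mathrm{Aut}(\P^2)$ acts on $\P^N = \P(\mathrm{Sym}^d(\C^3))$ by change of coordinates, sending $f$ to $f\circ A^{-1}$ for $A\in\PGL_3(\C)$. This action preserves smoothness, hence restricts to an action on $\cal U_d$, and it sends the curve $V(f)$ biholomorphically to $V(f\circ A^{-1})$. I would first define the lifted action on $\cal U_d^{mark}$ by $A\cdot(f,[h]) = (f\circ A^{-1},\, [A|_{V(f)}\circ h])$, where $A|_{V(f)}:V(f)\to V(f\circ A^{-1})$ is the restriction of the projective automorphism. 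Since biholomorphic markings represent the same point of $\T{g}$, the key compatibility $\tilde\varphi_d(A\cdot(f,[h])) = [V(f\circ A^{-1}), A|_{V(f)}\circ h] = [V(f),h] = \tilde\varphi_d(f,[h])$ holds, so the $\PGL_3(\C)$-action preserves the fibers of $\tilde\varphi_d$.

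Next I would verify that the action is \emph{free} on each fiber. If $A\cdot(f,[h]) = (f,[h])$, then $A$ fixes the polynomial $f$ up to scalar, so $A$ restricts to an automorphism of $V(f)$ that is isotopic to the identity (since the markings agree in $\T{g}$). For $d\ge 4$ the genus is $g\ge 3$, and a nontrivial automorphism of a Riemann surface of genus $\ge 2$ acts nontrivially on $\T{g}$ (the action of $\Mod{g}$ on $\T{g}$ is by the properly discontinuous action, and only the identity mapping class fixes a marked point); hence $A|_{V(f)}=\mathrm{id}$, and since a projective automorphism of $\P^2$ restricting to the identity on a degree-$d\ge 3$ plane curve must be the identity of $\P^2$ (a smooth plane curve of degree $\ge 3$ is not contained in any line, so it spans $\P^2$ and rigidifies the automorphism), we get $A = \mathrm{id}$ in $\PGL_3(\C)$. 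For \emph{transitivity} on fibers, I would use that two marked curves $(f,[h])$ and $(f',[h'])$ lying over the same point $[X,h]\in\T{g}^{pl}$ have biholomorphic underlying curves $V(f)\cong V(f')$ via a map compatible with markings; the central point is that this abstract biholomorphism of plane curves is induced by a projective transformation of $\P^2$. This last fact is precisely the statement that the canonical (or the degree-$d$ plane) embedding is projectively rigid, which for smooth plane curves follows because the embedding $V(f)\hookrightarrow\P^2$ is intrinsically determined (e.g. by the very ample line bundle $\cal O(1)$, which is a canonically-defined power of the canonical bundle for quartics, or more generally recovered from the Hilbert scheme structure), so any isomorphism of curves carrying one plane model to another extends to $\P^2$.

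For local triviality I would produce local sections of $\tilde\varphi_d$ over $\T{g}^{pl}$ and invoke the standard fact that a locally-trivial fibration with a free transitive fiberwise group action admitting local sections is a principal bundle. A clean way is to use that $\cal U_d^{mark}\to\cal U_d$ and $\cal U_d\to\M_g^{pl}$ are already understood, and that $\tilde\varphi_d$ factors the monodromy cover through Teichm\"uller space; since $\T{g}$ is a manifold and $\PGL_3(\C)$ acts smoothly, local sections can be built from local holomorphic choices of plane models, realizing $\cal U_d^{mark}$ locally as $\T{g}^{pl}|_U \times \PGL_3(\C)$.

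The main obstacle I expect is the transitivity step, namely proving that an abstract biholomorphism between two smooth plane curves of degree $d$ is always realized by a projective transformation of the ambient $\P^2$. This is a projective-rigidity statement about plane curves and is the geometric heart of the proposition; the freeness and local-triviality steps are comparatively formal once the group action is set up. I would expect the author to handle transitivity either by a direct line-bundle argument (identifying $\cal O_{\P^2}(1)|_{V(f)}$ intrinsically so that any isomorphism of curves must carry the linear system $|\cal O(1)|$ to itself, forcing a $\PGL_3$-map) or by citing the classical result that for smooth plane curves of degree $\ge 4$ the plane embedding is unique up to $\PGL_3(\C)$.
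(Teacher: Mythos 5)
Your proposal follows essentially the same route as the paper: the same $\PGL_3(\C)$-action on $\cal U_d^{mark}$, verification of freeness, transitivity on fibers, and local triviality, with transitivity reduced to the classical fact that a smooth plane curve of degree $d\ge 4$ carries a unique $g^2_d$ (equivalently, its plane embedding is unique up to $\PGL_3(\C)$). That is precisely the line-bundle argument the paper gives, citing Serrano's theorem that the pullback of the hyperplane bundle is the unique $g^2_d$; your local-triviality sketch likewise matches the paper's appeal to local sections of a free action with manifold quotient.

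There is, however, a false statement in your freeness step. You justify the implication ``$A|_{V(f)}$ isotopic to the identity $\Rightarrow$ $A|_{V(f)}=\mathrm{id}$'' by asserting that only the identity mapping class fixes a marked point of $\T{g}$. This is wrong: the stabilizer of a point $[X,h]\in\T{g}$ under the $\Mod{g}$-action is isomorphic to $\mathrm{Aut}(X)$, which is nontrivial whenever $X$ has automorphisms --- indeed, the hyperelliptic locus studied in this very paper is exactly a union of fixed sets $\Fix([\tau])$ of nontrivial mapping classes, so the statement you invoke would imply $\cal H_g=\varnothing$. What saves your argument is that membership in $\cal U_d^{mark}$ imposes the \emph{stronger} condition $[A|_{V(f)}\circ h]=[h]$ as homotopy classes of homeomorphisms, not merely equality of the corresponding points of $\T{g}$; from this $A|_{V(f)}$ is isotopic to the identity, hence acts trivially on $H_1(V(f);\Z)$, and one then quotes the standard fact that an automorphism of a compact Riemann surface of genus $\ge 2$ acting trivially on homology is the identity --- this is exactly the citation the paper uses at this point. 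With that substitution the freeness step is complete, and the rest of your outline agrees with the paper's proof.
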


\begin{proof}

First, $\PGL_3(\C)$ acts on $\cal U_d^{mark}$ by $g\cdot (f,[h])=(g\cdot f, [g\circ h])$ where $g\cdot f$ denotes the action of $g$ on polynomials $f(x,y,z)$, by acting on the triple of variables $(x,y,z)$. This induces a map $g:V(f)\to V(g\cdot f)$ and $g\circ h$ is the composition of this map with the marking $h:\Sigma_g\to V(f)$.

This action is free. Indeed, if $g\cdot (f,[h])=(f,[h])$ then $g\cdot f=f$ and $[g\circ h]=[h]$. Thus $g$ induces an automorphism on the curve $V(f)$. Moreover, this automorphism acts trivially on the marking, hence trivially on $H_1(V(f);\Z)$. An automorphism of $V(f)$ acting trivially on homology must be the identity \cite[Theorem 6.8]{FM}. The fixed set of any automorphism of $\P^2$ is a linear subspace, so any $g\in\PGL_3(\C)$ point-wise fixing a smooth quartic curve must be the identity automorphism.

Next, we show that this action is transitive on fibers. It suffices to show that if $\tilde\varphi_d(f_1,[h_1])=\tilde\varphi_d(f_2,[h_2])$, then the $(f_i,[h_i])$ lie in the same $\PGL_3(\C)$-orbit. By assumption, $[V(f_1),h_1]=[V(f_2),h_2]$ and there is some biholomorphism $\psi:V(f_1)\to V(f_2)$ such that $[\psi\circ h_1]=[h_2]$. Then the pullback of the hyperplane bundle $H$ along the embeddings $e_i:V(f_i)\to\P^2$ gives line bundles $L_i:=e_i^*(H)$ on $V(f_i)$ of degree $d$ with $h^0(L_i)=3$.

A $g^r_d$ {\it line bundle} is a line bundle $L\to C$ such that $\deg(L)=d$ and $h^0(L)\ge r+1$. Smooth plane curves have a unique $g^2_d$ given by the pullback of the hyperplane bundle \cite[Theorem 3.13]{Ser}. Therefore, $L_1$ and $\psi^*L_2$ are isomorphic line bundles on $V(f_1)$.

For any smooth curve $C$, there is a correspondence between maps $C\to\P^r$ up to the action of $\PGL_{r+1}(\C)$ and pairs $(L,V)$ where $L$ is a line bundle over $C$ and $V\subset H^0(C;L)$ is an $(r+1)$-dimensional subspace. The fact that there is a unique line bundle $L$ on $V(f_1)$ with $h^0(L)\ge 3$ implies that there is only one such map $V(f_1)\to\P^2$ up to the action of $\PGL_3(\C)$. Therefore, the two embeddings $e_1$ and $e_2\circ\psi$ are equivalent up to the action of $\PGL_2(\C)$, i.e. there is some $g\in\PGL_2(\C)$ such that $g\circ e_1=e_2\circ\psi$. This implies that $g\cdot f_1=f_2$ and $g:V(f_1)\to V(f_2)$ coincides with $\psi$. Thus, $(f_1,[h_1])$ and $(f_2,[h_2])$ lie in the same $\PGL_3(\C)$-orbit.

Finally, it remains to prove local triviality. This is a consequence of a much more general fact that if $G$ acts on a manifold $P$ freely such that $P/G$ is a manifold, then $q:P\to P/G$ is locally trivial. Indeed, a local trivialization of $q:P\to P/G$ can be built over any contractible subset $U$ by first taking a section $\sigma:U\to P$ and defining $\varphi:q^{-1}(U)\to U\times G$ by $\varphi(x)=(q(x),g(x))$, where $g(x)\in G$ is the unique element such that $x=g(x)\cdot\sigma(q(x))$.
\end{proof}

\begin{prop}
Let $d\ge 3$ and $g=\binom{d-2}{2}$. The space $\cal U^{mark}_d$ has finitely many components. Consequently, $\T{g}^{pl}$ has finitely many components.
\end{prop}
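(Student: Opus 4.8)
The plan is to identify the set of connected components $\pi_0(\cal U_d^{mark})$ with a coset space of the mapping class group and then invoke the finite-index theorem of Salter. The first step is to recognize the forgetful map $\cal U_d^{mark}\to\cal U_d$, $(f,[h])\mapsto f$, as a covering map whose fiber over $f\in\cal U_d$ is the set of homotopy classes of orientation-preserving markings $h:\Sigma_g\to V(f)$. This fiber is a torsor under $\Mod{g}$, acting by precomposition, and the monodromy action of $\pi_1(\cal U_d)$ on the discrete fiber is exactly the action through $\rho_d$: parallel transport of a marking around a loop $\gamma$ twists it by $\rho_d(\gamma)$. This is the content already implicit in the earlier remark that the fundamental group of a single chosen component is $\pi_1(\cal U_d^{mark})\cong K_d=\ker(\rho_d)$.

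Next I would show that the connected components of $\cal U_d^{mark}$ are in bijection with the orbits of this monodromy action on the fiber, hence with a coset space of $\im(\rho_d)$ in $\Mod{g}$. Concretely, fixing the basepoint $f_0$ (the Fermat curve) together with a marking $[h_0]$ in its fiber, a point $(f,[h])$ lies in the path-component of $(f_0,[h_0])$ if and only if $[h]$ is the parallel transport of $[h_0]$ twisted by some $\rho_d(\gamma)$ with $\gamma$ a loop at $f_0$. Thus the orbit of $[h_0]$ is the coset $[h_0]\cdot\im(\rho_d)$, and the number of components of $\cal U_d^{mark}$ equals the index $[\Mod{g}:\im(\rho_d)]$. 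Finiteness now follows immediately from Salter's theorem \cite[Theorem A]{Salt2} that $\im(\rho_d)$ always has finite index in $\Mod{g}$; for $d=3,4$ the image is all of $\Mod{g}$ (by Dolgachev--Libgober and by Kuno \cite[Proposition 6.3]{K}), so in those cases there is exactly one component.

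For the stated consequence, $\T{g}^{pl}$ is by definition the image $\tilde\varphi_d(\cal U_d^{mark})$. Since $\tilde\varphi_d$ is continuous, each of the finitely many components of $\cal U_d^{mark}$ maps into a single connected subset of $\T{g}^{pl}$; as $\T{g}^{pl}$ is a finite union of connected subspaces, it has at most that many components, and we are done. I expect the essential input to be Salter's finite-index theorem, which is quoted, so that the only genuine work is the covering-space bookkeeping of the previous two steps. The one point requiring care there is keeping track of the left-versus-right structure of the $\Mod{g}$-torsor of markings, so that the monodromy orbits are correctly identified with cosets (rather than double cosets) of $\im(\rho_d)$; this is routine but is where a sign or side error would most easily slip in.
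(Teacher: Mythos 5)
Your proposal is correct and takes essentially the same approach as the paper: both identify the components of $\cal U_d^{mark}$ with the cosets of $\im(\rho_d)$ in $\Mod{g}$ via covering-space theory and then conclude finiteness from Salter's theorem \cite[Theorem A]{Salt2}. Your write-up merely spells out the monodromy-orbit bookkeeping (and the continuous-image argument for $\T{g}^{pl}$) that the paper's terse proof leaves implicit.
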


\begin{proof}
A single component of $\cal U_d^{mark}$ is the connected covering space of $\cal U_d$ corresponding to $K_d$. Hence, its deck transformation group is the image of the homomorphism $\rho_d:\pi_1(\cal U_d)\to\Mod{g}$. The components of $\cal U_d^{mark}$ are enumerated by the cosets of $\im(\rho_d)$ in $\Mod{g}$. It was shown in and \cite[Theorem A]{Salt2} that the index $[\Mod{g}:\im(\rho_d)]<\infty$.
\end{proof}

\subsection{The kernel of the geometric monodromy of the universal quartic} In this subsection, we prove Theorem \ref{M1}.

\begin{namedtheorem}[Theorem \ref{M1}]
The group $K_4$ is isomorphic to $F_\infty\times\Z/3\Z$, where $F_\infty$ is an infinite rank free group. Moreover, $F_\infty$ has a free generating set in bijection with the set of cosets of the hyperelliptic mapping class group $\HH{3}$, and $$H_1(K_4;\Q)\cong \Q[\Mod{3}/\HH{3}]$$ as $\Mod{3}$-modules.
\end{namedtheorem}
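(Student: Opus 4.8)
The plan is to combine Proposition \ref{principal} with the homotopy computation of Theorem \ref{M2} in the special case $d=4$, where $g=\binom{3}{2}=3$ and the sphere dimension is $n=2g-5=1$. The first step is to identify the base of the bundle. A smooth plane quartic is precisely a canonically embedded non-hyperelliptic curve of genus $3$, and conversely every non-hyperelliptic genus $3$ curve is realized as a smooth plane quartic by its canonical map; hence the image $\M_3^{pl}$ of $\varphi_4$ is exactly the non-hyperelliptic locus $\M_3\setminus\bar{\cal H_3}$, and lifting along $\pi$ gives $\T{3}^{pl}=\T{3}\setminus\cal H_3$. By Proposition \ref{principal}, the map $\tilde\varphi_4:\cal U_4^{mark}\to\T{3}\setminus\cal H_3$ is therefore a principal $\PGL_3(\C)$-bundle over the hyperelliptic complement; since Kuno's theorem gives $\im(\rho_4)=\Mod{3}$, the space $\cal U_4^{mark}$ is connected and $\pi_1(\cal U_4^{mark})\cong K_4$.

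Second, I would show this bundle is trivial. The group $\PGL_3(\C)$ is connected and deformation retracts onto $PU(3)=SU(3)/(\Z/3\Z)$, so $\pi_1(\PGL_3(\C))\cong\Z/3\Z$ and $B\PGL_3(\C)$ is simply connected. Principal $\PGL_3(\C)$-bundles over the paracompact base $\T{3}\setminus\cal H_3$ are classified by the homotopy set $[\T{3}\setminus\cal H_3,\,B\PGL_3(\C)]$. By Theorem \ref{M2} and Corollary \ref{HomotopyEquiv} the base is homotopy equivalent to the one-dimensional complex $\bigvee_{\Mod{3}/\HH{3}} S^1$, and the obstructions to null-homotopy of a map out of a one-dimensional complex lie in $H^1(-;\pi_1(B\PGL_3(\C)))=H^1(-;0)=0$, so this set is a single point. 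Hence the bundle is trivial and $\cal U_4^{mark}\simeq(\T{3}\setminus\cal H_3)\times\PGL_3(\C)$.

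Third, I would read off the group structure. From the product decomposition,
$$K_4\cong\pi_1(\cal U_4^{mark})\cong\pi_1\Big(\bigvee_{\Mod{3}/\HH{3}} S^1\Big)\times\pi_1(\PGL_3(\C))\cong F_\infty\times\Z/3\Z,$$
where the free factor $F_\infty$ has a free generating set in bijection with the wedge summands, i.e. with the cosets $\Mod{3}/\HH{3}$, and the $\Z/3\Z$ factor is $\pi_1(\PGL_3(\C))$. For the homology statement, the K\"unneth formula over $\Q$ gives $H_1(K_4;\Q)\cong H_1(F_\infty;\Q)\oplus H_1(\Z/3\Z;\Q)\cong\Q[\Mod{3}/\HH{3}]$, the torsion factor contributing nothing rationally. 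The $\Mod{3}$-module structure comes from the conjugation action of $\pi_1(\cal U_4)/K_4\cong\Mod{3}$ on $K_4^{\mathrm{ab}}$, which is the deck-transformation action on $H_1$ of the cover; since $\tilde\varphi_4$ is equivariant for the change-of-marking action of $\Mod{3}$ on $\T{3}$, this matches the permutation action on $H_1(\T{3}\setminus\cal H_3;\Q)\cong\Q[\Mod{3}/\HH{3}]$ identified in Corollary \ref{HomotopyEquiv}.

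The main obstacle is the first step together with the equivariance in the last: one must verify cleanly that the planar locus in Teichm\"uller space is \emph{exactly} $\T{3}\setminus\cal H_3$ (not merely that quartics are non-hyperelliptic, but that the marked images sweep out the entire complement), and that the resulting isomorphism $H_1(K_4;\Q)\cong\Q[\Mod{3}/\HH{3}]$ is genuinely $\Mod{3}$-equivariant rather than merely an abstract isomorphism. The triviality argument in the second step is robust but depends essentially on $n=2g-5=1$; for $g\ge 4$ the base would contain $2$-spheres and the class in $\pi_2(B\PGL_3(\C))\cong\Z/3\Z$ could obstruct triviality, which, together with the unknown topology of the higher-degree planar loci, is why the method is confined to $d=4$.
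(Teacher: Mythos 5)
Your proposal is correct and takes essentially the same approach as the paper: identify $\T{3}^{pl}$ with $\T{3}\setminus\cal H_3$ via the canonical embedding of non-hyperelliptic genus $3$ curves, use Proposition \ref{principal}, Kuno's surjectivity, and Theorem \ref{M2} to trivialize the principal $\PGL_3(\C)$-bundle over a wedge of circles, and then read off $K_4\cong F_\infty\times\Z/3\Z$ together with the $\Mod{3}$-equivariant identification $H_1(K_4;\Q)\cong\Q[\Mod{3}/\HH{3}]$. The only cosmetic difference is that you justify triviality through the classifying-space map to $B\PGL_3(\C)$, whereas the paper asserts the existence of a section directly; the same obstruction theory over a $1$-complex with connected structure group underlies both.
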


\begin{proof}[Proof of Theorem \ref{M1}] Classically, $\T{3}^{pl}$ is exactly the complement of the hyperelliptic locus $\cal H_3$ in $\T{3}$: the canonical map $C\to\P^2$ is an embedding precisely when $C$ is nonhyperelliptic \cite[Pages 246-7]{GH}. Consider the following principal fiber bundle.
\begin{diagram}
\PGL_3(\C)	&\rTo	&	\cal U_4^{mark}\\
			&		&	\dTo_{\varphi_4}\\
			&		&	\T{3}\setminus\cal H_3
\end{diagram}
Because $\rho_4:\pi_1(\cal U_4)\to\Mod{3}$ is surjective \cite[Proposition 6.3]{K}, $\cal U_4^{mark}$ is connected.

By Theorem \ref{M2}, $\T{3}\setminus\cal H_3$ is homotopy equivalent to an infinite wedge of circles and, since $\PGL_3(\C)$ is connected, there must exist some continuous section $\sigma:\T{3}\setminus\cal H_3\to\cal U_4^{mark}$. Because $\varphi_4$ is a principal $\PGL_3(\C)$-bundle, the existence of such a section implies that $\cal U_4^{mark}$ is homeomorphic to $\PGL_3(\C)\times(\T{3}\setminus\cal H_3)$, and so
\begin{align}\label{kernelcalculation}
\pi_i(\cal U_4^{mark})=\begin{cases}\Z/3\Z\times F_\infty, &\text{for }i=1\\ \pi_i(\PGL_3(\C)), &\text{for }i>1.\end{cases}
\end{align} This also shows that $\pi_i(\cal U_4)\cong\pi_i(\PGL_3(\C))$ for $i\ge 2$.

We now wish to show that $H_1(K_4;\Q)$ is isomorphic to $\Q\left[\Mod{3}/\HH{3}\right]$ as $\Mod{3}$-modules. The calculation of $K_4\cong\pi_1(\cal U_4^{mark})$ in equation \ref{kernelcalculation} shows that the projection $$\cal U_4^{mark}\xrightarrow{\cong}\PGL_3(\C)\times(\T{3}\setminus\cal H_3)\to\T{3}\setminus\cal H_3$$ induces an isomorphism $$H_1(K_4;\Q)\cong H_1(\T{3}\setminus\cal H_3;\Q).$$ The action of $\Mod{3}$ on $\cal U_4^{mark}$ commutes with the projection map $$\cal U_4^{mark}\to\T{3}\setminus\cal H_3,$$ so that the above isomorphism of $\Q$-vector spaces is an isomorphism of $\Mod{3}$-modules.

The group $H_1(\T{3}\setminus\cal H_3;\Z)$ is the free abelian group on the set of cycles in $\T{3}\setminus\cal H_3$ represented by meridians around the components of the hyperelliptic locus $\cal H_3$; that is, the boundaries of disks transversely intersecting $\cal H_3$ in a single point. Such cycles are in bijection with the cosets of $\Mod{3}/\HH{3}$ (see proof of Corollary \ref{HomotopyEquiv}). This bijection commutes with the action of $\Mod{3}$ and therefore this $\Mod{3}$-module is isomorphic to the permutation representation $\Q[\Mod{3}/\HH{3}]$.
\end{proof}

The following table shows $\pi_i(\cal U_4)\cong\pi_i(\PGL_3(\C))$ for small values of $i\ge 2$ (c.f. \cite[Introduction]{Mam}, where we have used the fact that $\SL_3(\C)$ covers $\PGL_3(\C)$ and is homotopy equivalent to ${\rm SU}(3)$).

\begin{center}
\begin{tabular}{|c|c|c|c|c|c|c|c|c|c|c|c|}
\hline
$i$						&2	&3		&4	&5		&6			&7	&8			&9			&10			&11			&12\\
\hline
$\pi_i(\cal U_4)$		&0	&$\Z$	&0	&$\Z$	&$\Z/6\Z$	&0	&$\Z/12\Z$	&$\Z/3\Z$	&$\Z/30\Z$	&$\Z/4\Z$	&$\Z/60\Z$\\
\hline
\end{tabular}
\end{center}

\bibliography{bibliography}
\bibliographystyle{alpha}

\end{document}